\numberwithin{equation}{section}
\newcommand{\Gb}{\mathcal{G}}
\newcommand{\R}{{\mathbbm R}}
\newcommand{\N}{{\mathbbm N}}
\newcommand{\1}{{\mathbbm 1}}
\DeclareMathOperator{\Geom}{Geom}
\colorlet{owngrey}{gray!55}
\spnewtheorem{thm}{Theorem}{\bfseries}{\itshape}
\spnewtheorem{lem}[thm]{Lemma}{\bfseries}{\itshape}
\spnewtheorem{prop}[thm]{Proposition}{\bfseries}{\itshape}
\spnewtheorem{coro}[thm]{Corollary}{\bfseries}{\itshape}
\spnewtheorem{proposition}[thm]{Proposition}{\bfseries}{\itshape}
\spnewtheorem{definition}[thm]{Definition}{\bfseries}{\upshape}
\spnewtheorem{remark}[thm]{Remark}{\itshape}{\upshape}
\spnewtheorem*{xproof}{}{\itshape}{\rmfamily}
\renewenvironment{proof}[1][\proofname]
{\xproof}
{\endxproof}
\pgfplotsset{every axis/.append style={
		axis x line=middle,    
		axis y line=middle,    
		axis line style={->}, 
	},
	cmhplot/.style={color=red,mark=none,line width=1pt,<->},
	soldot/.style={color=red,only marks,mark=*},
	holdot/.style={color=red,fill=white,only marks,mark=*},
}
\tikzset{>=stealth}
\begin{document}

\title{A probabilistic view on the deterministic mutation-selection equation: dynamics, equilibria, and ancestry via individual lines of descent
}

\author{Ellen Baake         \and
        Fernando Cordero \and
        Sebastian Hummel
}

\institute{Faculty of Technology, Bielefeld University, Box 100131, 33501 Bielefeld, Germany \\
 \and Ellen Baake \\
\email{ebaake@techfak.uni-bielefeld.de}  \\         
           \and
           Fernando Cordero \\
              \email{fcordero@techfak.uni-bielefeld.de}  \\ \and
           Sebastian Hummel \\
              \email{shummel@techfak.uni-bielefeld.de}  
}

\date{Received: date / Accepted: date}

\maketitle

\begin{abstract}
We reconsider the deterministic haploid mutation-selection equation with two types. This is an ordinary differential equation that describes the type distribution (forward in time) in a population of infinite size. This paper establishes ancestral (random) structures inherent in this deterministic model. In a first step, we obtain a representation of the deterministic equation's solution (and, in particular, of its equilibrium) in terms of an ancestral process called the killed ancestral selection graph. This representation allows one to understand the bifurcations related to the error threshold phenomenon from a genealogical point of view. Next, we characterise the ancestral type distribution by means of the pruned lookdown ancestral selection graph and study its properties at equilibrium. We also provide an alternative characterisation in terms of a piecewise-deterministic Markov process. Throughout, emphasis is on the underlying dualities as well as on explicit results.
\keywords{mutation-selection equation \and pruned lookdown ancestral selection graph \and killed ancestral selection graph \and error threshold}
\subclass{92D15 \and 60J28 \and 60J75 \and 05C80}
\end{abstract}

\section{Introduction}
\label{intro}
Understanding the interplay between mutation and selection is a major topic of population genetics research. By and large, the field is divided into two major lines of research, devoted to deterministic and stochastic models, respectively. The deterministic mutation-selection equation describes the action of mutation and selection on the genetic composition of an effectively infinite population; its first version goes back to \citet{crow1956}. Deterministic mutation-selection equations are formulated in terms of discrete- or continuous-time dynamical systems, and they are treated forward in time throughout, via the well-developed methods of dynamical systems; a comprehensive overview of the research until 2000 is provided in the monograph by \citet{burger2000mathematical}. Stochastic mutation-selection models, such as the Moran and Wright-Fisher models with mutation and selection, additionally capture the fluctuations due to random reproduction over long time scales; these fluctuations are absent in the deterministic dynamics. The stochastic models have their roots in the seminal work of \citet{fisher1930genetical}, \citet{wright1931evolution}, \citet{malecotmathematiques}, \citet{feller1951diffusion}, and \citet{moran1958random}. They are formulated in terms of stochastic processes in discrete or continuous time, and are often made tractable via a diffusion limit. Their modern treatment further relies crucially on the genealogical point of view, where lines of descent are traced backward in time with the help of ancestral processes, such as the ancestral selection graph \citep{krone1997ancestral}. Overviews of the area may be found in the monographs by \citet{ewens2004mathematical}, \citet{durrett2008probability}, and \citet{wake2009}. 

During the last decades, deterministic and stochastic population genetics have largely led separate lives. It is the purpose of this article to bring the two research areas closer together by working out the backward point of view, so far reserved to stochastic models of population genetics, for deterministic mutation-selection equations. The first step in this direction was taken by \citet{Cordero2017590}; this will be our starting point. We will work with the simplest model, namely, with haploid individuals, two types, selection, and mutation, and pursue two major aims. First, we will obtain a representation of the solution of the deterministic mutation-selection equation and its equilibrium state(s) in terms of an ancestral process termed the killed ancestral selection graph. Second, we will characterise the type distribution of the ancestors of today's individuals in the distant past by what we call the pruned lookdown ancestral selection graph. Throughout, emphasis will be on the underlying dualities as well as on explicit results and worked details. 

The paper is organised as follows. We set out in Section~\ref{sec:moran} by introducing the Moran model with two types, selection, and mutation. This is a stochastic model for a finite population, which leads to the deterministic mutation-selection equation via a law of large numbers. Next, the graphical constructions required to trace back ancestral lines are introduced; namely, the ancestral selection graph (Section~\ref{s3}), the killed ancestral selection graph (Section~\ref{s4}), and the pruned lookdown ancestral selection graph (Section~\ref{s5}), all in the deterministic limit. In the special case of unidirectional mutation (away from the beneficial type, without back mutation), the results shed new light on the bifurcations related to the so-called error threshold phenomenon. Finally (Section~\ref{s6}), we characterise the ancestral type distribution in two ways: first, by means of the pruned lookdown ancestral selection graph; and second, as the absorption probability of a piecewise-deterministic Markov process.

\section{The two-type Moran model and its deterministic limit} 
\label{sec:moran}
We consider the two-type Moran model with mutation and selection, which is described as follows. We have a haploid population of fixed size~$N$. Each individual in this population has a type, which is either~$0$ or~$1$. Individuals of type~$1$ reproduce at rate~$1$, whereas individuals of type~$0$ reproduce at rate~$1+s^{}$ with~$s^{}\geq0$. We refer to type~$0$ as the fit or beneficial type, whereas type~$1$ is unfit or deleterious.
When an individual reproduces, its single offspring inherits the parent's type and replaces a uniformly-chosen individual in the population, thereby keeping the population size constant. Each individual mutates at rate~$u^{}$; the type after the event is~$i$ with probability~$\nu_{i},\ i\in \{0,1\}$. We assume throughout that~$u^{}$ is positive and~$\nu_0,\ \nu_1$ are non-negative with~$\nu_0+\nu_1=1$. 

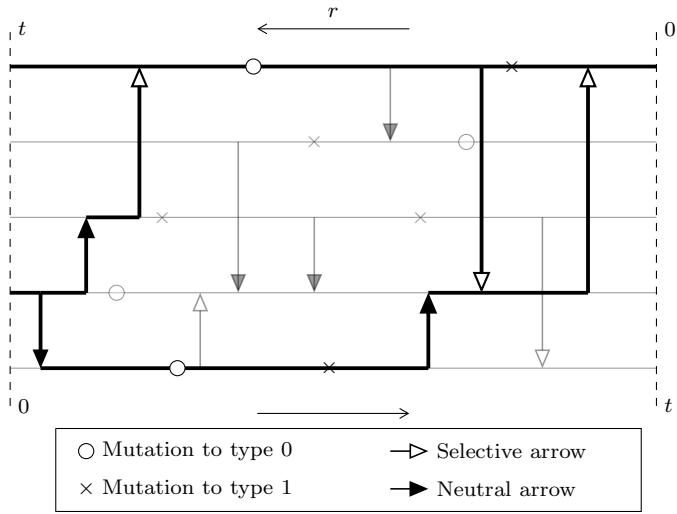
\begin{figure}[b!]
	\begin{center}
	\scalebox{1}{\begin{tikzpicture}
		\draw[line width=.5mm ] (8.5,4) -- (0,4);
		\draw[line width=.5mm ] (7.6,1) -- (5.5,1);
		\draw[line width=.5mm ] (1.7,2) -- (1,2);
		\draw[line width=.5mm ] (1,1) -- (0.4,1);
		\draw[line width=.5mm ] (5.5,0) -- (.4,0);
		\draw[line width=.5mm ] (.4,1) -- (0,1);
		\draw[line width=.5mm ] (7.6,1) -- (7.6,3.75);
		\draw[line width=.5mm ] (1.7,2) -- (1.7,3.75);
		\draw[line width=.5mm ] (1,1.8) -- (1,1);
		\draw[line width=.5mm ] (5.5,0.8) -- (5.5,0);
		\draw[line width=.5mm ] (6.2,4) -- (6.2,1.25);
		\draw[line width=.5mm ] (0.4,1) -- (.4,.2);
		
		\node[opacity=0.4] at (4,3) {$\times$} ;
		\node[opacity=0.4] at (2,2) {$\times$} ;
		\node at (6.6,4) {$\times$} ;
		\node[opacity=0.4] at (5.4,2) {$\times$} ;
		\node at (4.2,0) {$\times$} ;        
		\draw[dashed] (0,-0.5) --(0,4.5);
		\draw[dashed] (8.5,-0.5) --(8.5,4.5);
		\node [right] at (0,-0.5) {$0$};
		\node [right] at (0,4.5) {$t$};
		\node [right] at (8.5,4.5) {$0$};
		\node [right] at (8.5,-0.5) {$t$};
		\draw[-{triangle 45[scale=5]},semithick,opacity=.4] (4,2) -- (4,1) node[text=black, pos=.6, xshift=7pt]{};
		\draw[opacity=0.4] (0,1) -- (8.5,1);
		\draw[opacity=0.4] (8.5,2) -- (0,2);
		\draw[opacity=0.4] (8.5,3) -- (0,3);
		
		\draw[opacity=0.4] (0,4) -- (8.5,4);
		
		\draw[-{triangle 45[scale=5]},semithick,opacity=0.4] (5,4) -- (5,3);
		\draw[-{triangle 45[scale=5]},semithick,opacity=1] (.4,1) -- (.4,0);

		\draw[-{open triangle 45[scale=5]},thick,opacity=1] (7.6,1) -- (7.6,4);
		\draw[-{open triangle 45[scale=5]},thick,opacity=1] (1.7,2) -- (1.7,4);
		\draw[-{open triangle 45[scale=5]},thick,opacity=1] (6.2,4) -- (6.2,1);
		
		\draw[-{triangle 45[scale=5]},thick,opacity=1] (1,1) -- (1,2);
		\draw[-{triangle 45[scale=5]},thick,opacity=1] (5.5,0) -- (5.5,1);
		\draw[-{open triangle 45[scale=5]},semithick,opacity=0.4] (2.5,0) -- (2.5,1);
		\draw[-{open triangle 45[scale=5]},semithick,opacity=0.4] (7,2) -- (7,0);
		\draw[-{triangle 45[scale=5]},semithick,opacity=0.4] (3,3) -- (3,1);
		\draw[opacity=0.4] (0,0) -- (8.5,0);
		\draw (2.2,0) circle (1mm)  [fill=white!100];    
		\draw[opacity=0.4] (6,3) circle (1mm)  [fill=white!100];
		\draw (3.2,4) circle (1mm)  [fill=white!100];
		\draw (1.4,1)[opacity=0.4] circle (1mm)  [fill=white!100];
		\draw[-{angle 60[scale=5]}] (3.25,-0.6) -- (5.25,-0.6) node[text=black, pos=.5, yshift=6pt]{};
		\draw[-{angle 60[scale=5]}] (5.25,4.5) -- (3.25,4.5) node[text=black, pos=.5, yshift=6pt]{$r$};
		\draw (1,-1.1) circle (1mm)  [fill=white!100] node at (1,-1.1) [right] {\ Mutation to type~$0$};
		\node at (1 ,-1.6) {$\times$} node at (1,-1.6)[right] {\ Mutation to type~$1$};    
		\draw[-{open triangle 45[scale=2.5]},semithick] (5,-1.1) -- (5.5,-1.1) node [right] {Selective arrow};
		\draw[-{triangle 45[scale=2.5]},semithick] (5,-1.6) -- (5.5,-1.6) node [right] {Neutral  arrow};
		\draw (.6,-0.8) -- (.6,-1.9) -- (8.3,-1.9) -- (8.3,-0.8) -- (.6,-0.8);
		
		\end{tikzpicture}    }
	\end{center}
	\caption{A realisation of the Moran interacting particle system (thin lines) and the embedded ASG (bold lines). Time runs forward in the Moran model ($\rightarrow$) and backward in the ASG ($\leftarrow$).}
	\label{fig:graphical.representation.ASG}
\end{figure}

The Moran model has a well-known graphical representation as an interacting particle system, see Fig.~\ref{fig:graphical.representation.ASG}. Here, individuals are represented by pieces of horizontal lines. Time runs from left to right in the figure. Reproduction events are depicted by arrows between the lines. If an individual places offspring via an arrow, the offspring inherits the parent's type and replaces the individual at the tip. We decompose reproduction events into neutral and selective ones. This is reflected by neutral and selective arrows in the graphical representation. Neutral arrows appear at rate~$1/N$ per ordered pair of lines; selective arrows appear at rate~$s^{}/N$ per ordered pair. Neutral arrows are used by all types; selective arrows are only used by individuals of type~$0$.
Mutation events are depicted by crosses and circles on the lines. A circle (cross) indicates a mutation to type~$0$ (type~$1$), which means that the type on the line is~$0$ (is~$1$) after the mutation. This occurs at rate~$u^{}\nu_0$ (at rate~$u^{}\nu_1$) per line. Both types of mutation events are independent of the type on the line before the event; in particular, silent events are included where the type is the same before and after the event. All arrows, crosses, and circles appear independently. Given a realisation of the particle system and an initial type configuration (that is, a type assigned to each line at~$t=0$), we can read off the types on the lines at all later times~$t>0$. The distribution of the initial types is independent of the law of the graphical elements (arrows, circles, and crosses). In particular, we will see that certain properties of the ancestral processes are determined by the graphical elements alone, irrespective of the initial types.

Let~$Y^{(N)}_t$ be the proportion of type-$1$ individuals at time~$t$ in a population of size~$N$. Clearly,~$\big(Y_t^{(N)}\big)_{t\geq 0}$, which we abbreviate by~$Y^{(N)}$, is a Markov process on~$[0,1]$. We will, in what follows, study the deterministic limit of the Moran model.
In \citet[Prop.~3.1]{2015CorderoULT}, it is shown that, if~$Y_0^{(N)} \longrightarrow y_0$ as~$N \to \infty$, then~$Y^{(N)}$ converges to the solution~$y(t;y_0)$ of the initial value problem
\begin{equation}\label{eq:dlimitdiffeq}\begin{aligned}
\frac{dy}{dt}(t)&=-sy(t)\big(1-y(t)\big)-u\nu_0y(t)+u\nu_1\big(1-y(t)\big),\qquad t\geq 0,\\
y(0)&=y_0, \qquad \text{for} \ y_0\in [0,1].
\end{aligned} \end{equation}
The convergence is uniform on compact sets of time in probability and a special case of the dynamical law of large numbers of \citet[Thm.~3.1]{kurtz1970}; see also \citet[Thm.~11.2.1]{ethier1986markovprocces}. Neither parameters nor time are rescaled. This corresponds to a strong mutation--strong selection setting. (Note that this in contrast to the usual diffusion limit, where parameters and time are rescaled with population size; this is suitable in a weak mutation--weak selection framework, see, e.g., \citet[Ch.~7.2]{durrett2008probability}.) If~${\nu_0\in (0,1)}$, the convergence carries over to~$t \to \infty$ in the sense that the stationary distribution of the Moran model converges in distribution to the point measure on~$\bar{y}$ as~$N\to\infty$ \citep{2015CorderoULT}, where~$\bar{y}$ is the (unique) stable equilibrium of the ODE. The initial value problem \eqref{eq:dlimitdiffeq} is the classical mutation-selection equation of population genetics~\citep{crow1956,crow1970introduction}. It is a Riccati differential equation with constant coefficients and hence the solution is known explicitly~\citep{2015CorderoULT}. The equilibrium points of \eqref{eq:dlimitdiffeq} are the solutions of the equation
\begin{equation}\label{equieq}
s y^2-(u+s)y+u\nu_1=0.
\end{equation}
The stable equilibrium, known, for example, via \citet[Lem.~3.1]{2015CorderoULT}, is given by
\begin{equation}\label{eq:stablepoint}
\bar{y}=\begin{cases}\frac{1}{2}\left(1+\frac{u}{s}-\sqrt{\big(1-\frac{u}{s}\big)^2 + 4\nu_0\frac{u}{s}}\ \right), & s>0,\\
\nu_1, &s=0.
\end{cases}
\end{equation}
If~$s>0$, there is an additional equilibrium of \eqref{eq:dlimitdiffeq}, which is unstable~(see \citet[Lem.~3.1]{2015CorderoULT}), namely,~$$y^{\star}=
\frac{1}{2}\left(1+\frac{u}{s}+\sqrt{\left(1-\frac{u}{s}\right)^2 + 4\nu_0\frac{u}{s}}\,\right).$$
If~$\nu_0>0$, then~$\bar{y}\in [0,1)$ and~$y^{\star}>1$; so~$\bar{y}$ is the only relevant equilibrium. Furthermore,~$\lim_{t\to \infty} y(t;y_0)=\bar{y}$ for all~$y_0\in [0,1]$. If~$\nu_0=0$, the two equilibria reduce to~$\bar{y}=\min\{u/s,1\}$ and~${\nobreak y^{\star}=\max\{1,u/s\}}$. In particular, if~$\nu_0=0$ and~$u\geq s$, then~$\bar{y}$ is still the only relevant equilibrium and again~${\lim_{t\to \infty} y(t;y_0)=\bar{y}}$ for all~$y_0\in [0,1]$. But if~$\nu_0=0$ and~$u<s$, then there are two equilibria in the unit interval. In particular, then~$\lim_{t\to \infty} y(t;y_0)=\bar{y}$ for~$y_0\in [0,1)$, while~$y(t;1)\equiv1$.

For~$s>0$, Fig.~\ref{fig:stationarydistribution} shows how~$\bar y$ and~$y^{\star}$ depend on~$u/s$ and~$\nu_0$. For increasing~$u/s$, the effect of selection decreases in the sense that~$\bar y$ increases; it converges to~$\nu_1$ for~$u/s \to \infty$, which is also the equilibrium frequency when selection is absent. 
The case~$s>0,\ \nu_0=0$ deserves special attention. If~$u/s < 1$, both~$\bar{y}=u/s$~(stable) and~$y^{\star}=1$ (unstable) are in~$[0,1]$; when~$u$ surpasses the critical value~$s$, the~$\bar{y}=1$ is the only equilibrium in~$[0,1]$, and is attracting for all~$y_0 \in [0,1]$. This phenomenon is known as the error threshold \citep{Eigen1971,Eigenetal88}; it means that selection ceases to operate for~$u \geq s$. Extending \eqref{eq:dlimitdiffeq} to~$y_0\in \R$ yields a \emph{transcritical} (or \emph{exchange of stability}) bifurcation of the equilibria at~$1$ and~$u/s$: For~$u<s$, the former is unstable and the latter is stable; and vice versa for~$u>s$. See \citet{Baake1997} for more details.
Let us only add here that the equilibrium at $u/s$ in this classical mutation-selection equation with $\nu_0=0$ and $u<s$ is used to estimate fitness landscapes from molecular data via appropriate averaging~\citep{zanini2017vivo}.
\begin{figure}[t!]
	\begin{center}
		\scalebox{.35}{
			\input{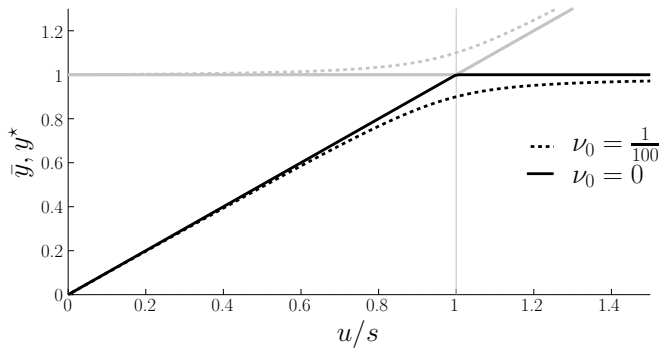}
		}
	\end{center}
\caption{The equilibria of \eqref{eq:dlimitdiffeq} as a function of~$u/s$ for~$s>0$. Black line:~$\bar{y}$ (stable); grey~line:~$y^{\star}$~(unstable).}
\label{fig:stationarydistribution}
\end{figure}

\section{The ancestral selection graph and its deterministic limit}\label{s3}

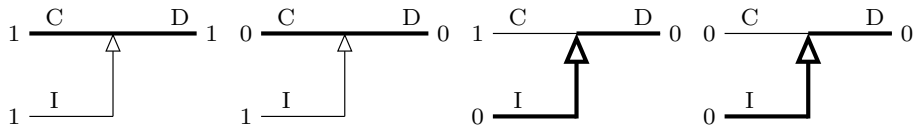
\begin{figure}[b!]
	\begin{minipage}{0.23 \textwidth}
		\centering
		\scalebox{1.1}{
			\begin{tikzpicture}
			\draw[line width=0.5mm] (0,1) -- (2,1);
			\draw[color=black] (0,0) -- (1,0);
			\draw[-{open triangle 45[scale=2.5]},color=black] (1,0) -- (1,1) node[text=black, pos=.6, xshift=7pt]{};
			\node[above] at (1.8,1) { D};
			\node[above] at (0.3,1) { C};
			\node[above] at (0.3,0) { I};
			\node[left] at (0,1) { $1$};
			\node[left] at (0,0) { $1$};
			\node[right] at (2,1) { $1$};
			\end{tikzpicture}}
	\end{minipage}\hfill
	\begin{minipage}{0.23 \textwidth}
		\centering
		\scalebox{1.1}{
			\begin{tikzpicture}
			\draw[line width=0.5mm] (0,1) -- (2,1);
			\draw[color=black] (0,0) -- (1,0);
			\draw[-{open triangle 45[scale=2.5]},color=black] (1,0) -- (1,1) node[text=black, pos=.6, xshift=7pt]{};
			\node[above] at (1.8,1) { D};
			\node[above] at (0.3,1) { C};
			\node[above] at (0.3,0) { I};
			\node[left] at (0,1) { $0$};
			\node[left] at (0,0) { $1$};
			\node[right] at (2,1) { $0$};
			\end{tikzpicture}}
	\end{minipage}\hfill
	\begin{minipage}{0.23 \textwidth}
		\centering
		\scalebox{1.1}{
			\begin{tikzpicture}
			\draw[] (0,1) -- (2,1);
			\draw[line width=0.5mm] (0,0) -- (1,0);
			\draw[line width=0.5mm] (1,1) -- (2,1);
			\draw[-{open triangle 45[scale=2.5]},color=black,line width=0.5mm] (1,-0.025) -- (1,1) node[text=black, pos=.6, xshift=7pt]{};
			\node[above] at (1.8,1) { D};
			\node[above] at (0.3,1) { C};
			\node[above] at (0.3,0) { I};
			\node[left] at (0,1) { $1$};
			\node[left] at (0,0) { $0$};
			\node[right] at (2,1) { $0$};
			\end{tikzpicture}}
	\end{minipage}\hfill
	\begin{minipage}{0.23 \textwidth}
		\centering
		\scalebox{1.1}{
			\begin{tikzpicture}
			\draw[] (0,1) -- (2,1);
			\draw[line width=0.5mm] (0,0) -- (1,0);
			\draw[line width=0.5mm] (1,1) -- (2,1);
			\draw[-{open triangle 45[scale=2.5]},color=black,line width=0.5mm] (1,-0.025) -- (1,1) node[text=black, pos=.6, xshift=7pt]{};
			\node[above] at (1.8,1) { D};
			\node[above] at (0.3,1) { C};
			\node[above] at (0.3,0) { I};
			\node[left] at (0,1) {$0$};
			\node[left] at (0,0) {$0$};
			\node[right] at (2,1) {$0$};
			\end{tikzpicture}}
	\end{minipage}
	\caption{The descendant line (D) splits into the continuing line (C) and the incoming line (I). The incoming line is ancestral if and only if it is of type~$0$. The true ancestral line is drawn in bold.}
	\label{fig:peckingorder}
\end{figure} The ancestral selection graph (ASG) by \citet{krone1997ancestral} is a tool to study the genealogical relations of a sample taken from the population at present. This is done in three steps, which we first describe for the finite-$N$ Moran model. One starts from an untyped sample (that is, no types have been assigned to the individuals) taken at~$t>0$, to which we refer as the present. In a first step, one goes backward in time and constructs a branching-coalescing graph, whose lines correspond to potential ancestors and are decorated with the mutation crosses and circles, as anticipated in Fig.~\ref{fig:graphical.representation.ASG}. When this graph has been constructed backward in time until time 0, say, one samples the types for each line without replacement from the initial type distribution. In a last step, one propagates the types forward up to time~$t$, taking into account the mutation and selection events. We will think of~$t$ as the time of sampling and denote the backward time by~$r$: Backward time~$r=0$ corresponds to the time point~$t$ and backward time~$r=t$ corresponds to the time point~$0$, as anticipated in Fig.~\ref{fig:graphical.representation.ASG}.

Let us describe these steps in detail. The branching-coalescing graph can be constructed via the graphical representation of the forward process in Fig.~\ref{fig:graphical.representation.ASG}. Choose~$n$ lines at time~$r=0$ and follow them back in time. At any time, the lines currently in the graph may be hit by selective arrows from in- or outside the current set of lines. Since we are in an untyped scenario, it is not yet possible to decide whether these arrows have been used or not. The idea is therefore to keep track of all potential ancestors of the sample. When a given line, which we call descendant line, is hit by a selective arrow, it splits into the continuing line (the one at the tip of the arrow) and the incoming line (the one at the tail). The incoming line is the ancestor if it is of type~$0$ and has thus used the selective arrow, whereas the continuing line is the ancestor if the incoming line is of type~$1$ and the selective arrow thus has not been used, see Fig.~\ref{fig:peckingorder}. 
We call this rule the \emph{pecking order}. A neutral arrow between two potential ancestors lets the two lines merge into one; this implies coalescence into a common ancestor. A coalescence event reduces the number of lines by one. 

If there are currently~$n$ lines, there is thus an increase to~$n+1$ at rate ${s(N-n) n/N}$ due to a selective arrow from one of the~$N-n$ individuals that are currently not potential ancestors. At rate~$n(n-1)/N$, there is a decrease to~$n-1$ due to a coalescence event. At rate~$sn(n-1)/N$, a selective arrow joins two potential ancestors currently in the graph. We call this a collision event. Collisions do not change the number of lines. The mutation circles and crosses occur on each line at rates~$u^{}\nu_0$ and~$u^{}\nu_1$, respectively. 

When the branching-coalescing graph has been constructed up to time~$r=t$, we sample a type for each line without replacement from the initial population with type distribution~$(1-Y^{(N)}_0,Y^{(N)}_0)$. One then propagates the types forward up to time~$t$ taking into account the pecking order and the mutations. Proceeding in this way, the types in~$[0,t]$ are determined, along with the true genealogy.

In the deterministic limit, the ASG turns into the following construction~(see \citet{Cordero2017590} for details). Branching, deleterious, and beneficial mutations occur at rate~$s, u\nu_1$, and~$u\nu_0$ per line, respectively. Since collisions and coalescences occur in the Moran model at rates of order~$\mathcal{O}(1/N)$, both types of events vanish as~$N\to \infty$. As a consequence, in the deterministic limit, all individuals in a sample remain independent in the backward process. It therefore suffices to consider a sample of size 1. For every finite time horizon, the number of lines remains bounded. The typing of the potential ancestors at~$r=t$ is done independently and identically according to the initial distribution~$(1-y_0,y_0)$.

\section{The killed ASG in the deterministic limit}\label{s4}
Our first aim now is to recover the solution of the deterministic mutation-selection equation \eqref{eq:dlimitdiffeq} by genealogical means. Recall that the solution~$y(t;y_0)$ gives the frequency of type 1 at time~$t$; the deterministic limit of the ASG is therefore the appropriate tool. Recall also that, due to the independence of the sampled individuals, it is sufficient to consider a single one. 

Our starting point is a well-known observation (e.g. \citet{Shiga1986,Athreya2005,Mano2009164}) that holds for the diffusion limit and carries over to the deterministic setting: In the absence of mutations, a single individual at time~$t$ is of type~$1$ if and only if all its potential ancestors at~$t=0$ are of type~$1$. This is easily verified via the pecking order (cf. Fig.~\ref{fig:peckingorder}). Namely, at every branching event, a type~$0$ on either the continuing or incoming line suffices for the descendant individual to be of type~$0$; iterating this over all branching events gives the statement. Mutations add further information about the types: they can determine the type of the sample even before we sample the initial types. More precisely, a mutation to type~$1$ determines the type of the line (to the right of the mutation) on which it occurs, so this line need not be traced back further into the past; it may be pruned. Next, the first mutation to type~$0$ (on any line that is still alive after the pruning) decides that the sampled individual has type~0, so that \emph{no} potential ancestor must be considered any further and the process may be killed. This motivates the following definition.

\begin{definition}
	The \textit{killed ASG in the deterministic limit} starts with one line emerging from each of the~$n$ individuals in the sample. Every line branches at rate~$s$ (due to a selective arrow from outside the set of potential ancestors). Every line is pruned at rate~$u\nu_1$ (due to a deleterious mutation). At rate~$u\nu_0$ per line, the process is killed (due to a beneficial mutation), that is, it is sent to the cemetery state~$\Delta$. All the events occur independently on every line. 
\end{definition}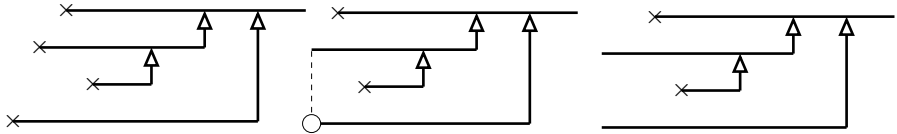
\begin{figure}[t]
\begin{minipage}{0.32\textwidth} 
	\begin{center}
		\scalebox{.7}{\begin{tikzpicture}
			\draw[line width=.5mm ] (3.5,2.1) -- (8,2.1);
			\draw[-{open triangle 45[scale=2.5]},line width=.5mm] (5.1,.7) -- (5.1,1.4);
			\draw[line width=.5mm ] (3,1.4) -- (6.1,1.4);
			\draw[-{open triangle 45[scale=2.5]},line width=.5mm] (6.1,1.4) -- (6.1,2.1);
			\draw[line width=.5mm ] (4,.7) -- (5.1,.7);
			\draw[-{open triangle 45[scale=2.5]},line width=.5mm] (7.1,0) -- (7.1,2.1);
			\draw[line width=.5mm ] (2.5,0) -- (7.1,0);
			\node at (3.5,2.1) {\scalebox{1.6}{$\times$}} ;
			\node at (3,1.4) {\scalebox{1.6}{$\times$}} ;
			\node at (4,.7) {\scalebox{1.6}{$\times$}} ;
			\node at (2.5,0) {\scalebox{1.6}{$\times$}} ;
			\draw[dashed,opacity=0] (2,0) --(2,2.1);
			\end{tikzpicture}}
		
	\end{center}
\end{minipage}
\hfill 
\begin{minipage}{0.3\textwidth} 
	\begin{center}
		\scalebox{.7}{\begin{tikzpicture}
			\draw[line width=.5mm ] (3.5,2.1) -- (8,2.1);
			\draw[-{open triangle 45[scale=2.5]},line width=.5mm] (5.1,.7) -- (5.1,1.4);
			\draw[line width=.5mm ] (3,1.4) -- (6.1,1.4);
			\draw[-{open triangle 45[scale=2.5]},line width=.5mm] (6.1,1.4) -- (6.1,2.1);
			\draw[line width=.5mm ] (4,.7) -- (5.1,.7);
			\draw[-{open triangle 45[scale=2.5]},line width=.5mm] (7.1,0) -- (7.1,2.1);
			\draw[line width=.5mm ] (3,0) -- (7.1,0);
			
			\node at (3.5,2.1) {\scalebox{1.6}{$\times$}} ;
			\node at (4,.7) {\scalebox{1.6}{$\times$}} ;
			\draw[dashed] (3,0) --(3,1.4);
			\draw (3,0) circle (1.7mm)  [fill=white!100];
			\end{tikzpicture}}
	\end{center}
\end{minipage}
\hfill
\begin{minipage}{0.32\textwidth} 
	\begin{center}
		\scalebox{.7}{\begin{tikzpicture}
			\draw[line width=.5mm ] (3.5,2.1) -- (8,2.1);
			\draw[-{open triangle 45[scale=2.5]},line width=.5mm] (5.1,.7) -- (5.1,1.4);
			\draw[line width=.5mm ] (2.5,1.4) -- (6.1,1.4);
			\draw[-{open triangle 45[scale=2.5]},line width=.5mm] (6.1,1.4) -- (6.1,2.1);
			\draw[line width=.5mm ] (4,.7) -- (5.1,.7);
			\draw[-{open triangle 45[scale=2.5]},line width=.5mm] (7.1,0) -- (7.1,2.1);
			\draw[line width=.5mm ] (2.5,0) -- (7.1,0);
			
			\node at (3.5,2.1) {\scalebox{1.6}{$\times$}} ;
			\node at (4,.7) {\scalebox{1.6}{$\times$}} ;
			\end{tikzpicture}}
	\end{center}
\end{minipage}
\caption{The killed ASG either absorbs in a state with~$0$ lines due to mutations to type~$1$ (left) or in a cemetery state~$\Delta$ due to a mutation to type~$0$ (center); it may also grow to~$\infty$ (not shown). The realisation on the right is still in a transient state.}
\label{fig:Rstates}
\end{figure}
Fig.~\ref{fig:Rstates} depicts some realisations of the killed ASG. There, we adopt the convention that the incoming line is always placed immediately beneath the continuing line. Let~$(R_r)_{r\geq 0}$ be the line-counting process of the killed ASG. This is a continuous-time Markov chain with values in~$\N_{0}^{\Delta}:=\N_0\cup \{\Delta\}$ and transition rates 
\begin{equation}\label{eq:Rrates}
q^{}_R(k,k+1) = ks, \qquad q^{}_R(k,k-1) = k u \nu^{}_1, \qquad q^{}_R(k,\Delta) = ku \nu^{}_0
\end{equation}
for~$k \in \N_0$.
The states~$0$ and~$\Delta$ are absorbing; all other states are transient. The state~$0$ is reached if all lines are pruned due to deleterious mutations. The state~$\Delta$ is reached upon the first beneficial mutation. Absorption in~$0$ (in~$\Delta$) implies that (not) all individuals in the sample are of type~$1$. The process may also grow to~$\infty$ (this happens with positive probability if~$\nu_0=0, u<s$).

We now establish a connection between the solution~$y(\cdot\,;y_0)$ of the (deterministic) mutation-selection equation and the (stochastic) line-counting process~$(R_r)_{r\geq 0}$, in terms of a duality relation, which formalises the ideas described above. Let~$H:[0,1]\times \N_{0}^{\Delta}\to\R$ be defined as \begin{equation}\label{eq:dualityfct}
H(y_0,n)=y_0^n, \quad \text{for }y_0\in [0,1],\ n\in \N_0^{\Delta},
\end{equation}
where~$y_0^{\Delta}:=0$ for all~$y_0\in[0,1]$. The function~$H$ returns the sampling probability for~$n$ individuals to be of type~$1$ under~$y_0$. Setting~$y_0^{\Delta}=0$ is in accordance with this interpretation: it is impossible to sample an unfit individual that has a beneficial mutation in its relevant ancestry. The function~$H$ will serve as our duality function.
\begin{thm}\label{thm:dualityRY}
	The line-counting process~$(R_r)_{r\geq 0}$ of the killed ASG and the solution~$y(\cdot\,;y_0)$ of the deterministic mutation-selection equation \eqref{eq:dlimitdiffeq} satisfy the duality relation \begin{equation}\label{eq:dualityRY}
	y(t;y_0)^n=E[y_0^{R_t} \mid R_0=n]\quad \text{for all } n\in \N_0^{\Delta},\ y_0\in [0,1], \ \text{and}\ t\geq 0.
	\end{equation}
\end{thm}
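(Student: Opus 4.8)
The plan is to prove \eqref{eq:dualityRY} through the standard generator criterion for Markov process duality, taking advantage of the fact that the forward object is a deterministic flow. Write $F(y):=-sy(1-y)-u\nu_0 y+u\nu_1(1-y)$ for the right-hand side of \eqref{eq:dlimitdiffeq}, so that $\tfrac{d}{dt}y(t;y_0)=F(y(t;y_0))$, and let $\mathcal{A}$ denote the associated first-order generator acting on differentiable functions of the frequency, $\mathcal{A}g(y)=g'(y)F(y)$. Let $\mathcal{B}$ denote the generator of the line-counting chain $(R_r)_{r\ge 0}$ read off from the rates \eqref{eq:Rrates}: on a function $h:\N_0^\Delta\to\R$ it acts as $\mathcal{B}h(n)=ns\,[h(n+1)-h(n)]+nu\nu_1\,[h(n-1)-h(n)]+nu\nu_0\,[h(\Delta)-h(n)]$. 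With the duality function $H(y_0,n)=y_0^n$ of \eqref{eq:dualityfct} (recall $y_0^\Delta=0$), the core of the proof is the pointwise identity
\begin{equation}\label{eq:gendual}
\mathcal{A}\,H(\cdot,n)(y)=\mathcal{B}\,H(y,\cdot)(n)\qquad\text{for all }y\in[0,1],\ n\in\N_0^\Delta.
\end{equation}

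I would establish \eqref{eq:gendual} by direct computation. On the left, $\mathcal{A}H(\cdot,n)(y)=n y^{n-1}F(y)$; expanding $F$ and using $\nu_0+\nu_1=1$ reduces this to $ns\,y^{n+1}-n(s+u)\,y^n+nu\nu_1\,y^{n-1}$. On the right, substituting $H(y,\cdot)$ into $\mathcal{B}$ and using $y^\Delta=0$ gives $ns\,[y^{n+1}-y^n]+nu\nu_1\,[y^{n-1}-y^n]-nu\nu_0\,y^n$, which, after collecting the $y^n$-terms (again via $\nu_0+\nu_1=1$), equals the same expression; hence the two sides agree. The absorbing cases are immediate and may be checked separately: for $n=0$ both sides of \eqref{eq:dualityRY} equal $1$, and for $n=\Delta$ both equal $0$, since $0$ and $\Delta$ are absorbing for $R$ while $y(t;y_0)^0=1$ and $y(t;y_0)^\Delta=0$.

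To turn the infinitesimal identity \eqref{eq:gendual} into the finite-time statement \eqref{eq:dualityRY}, I would fix $t\ge0$ and interpolate between the two processes by setting, for $r\in[0,t]$,
\begin{equation}\label{eq:interp}
G(r):=E\!\left[\,y(t-r;y_0)^{R_r}\mid R_0=n\,\right].
\end{equation}
Then $G(0)=y(t;y_0)^n$ and $G(t)=E[y_0^{R_t}\mid R_0=n]$, so it suffices to show that $G$ is constant. Differentiating \eqref{eq:interp} yields two contributions: one from the $r$-dependence of the frequency, which by the chain rule and $\tfrac{d}{dt}y=F(y)$ equals $-E[\mathcal{A}H(\cdot,R_r)(y(t-r;y_0))\mid R_0=n]$, and one from the evolution of the chain, which by Kolmogorov's equation equals $E[\mathcal{B}H(y(t-r;y_0),\cdot)(R_r)\mid R_0=n]$. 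By \eqref{eq:gendual} the two integrands cancel, so $G'\equiv0$, and $G(0)=G(t)$ is precisely \eqref{eq:dualityRY}.

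The step that requires genuine care is the differentiation of \eqref{eq:interp} together with the use of Kolmogorov's equation, because the rates in \eqref{eq:Rrates} grow linearly in $k$ and $(R_r)$ may explode (with positive probability when $\nu_0=0$ and $u<s$). I would therefore first carry out the argument for the chain stopped upon reaching a level $K$; its generator is then bounded, so all interchanges of summation, differentiation and expectation are justified and \eqref{eq:dualityRY} holds with $R_t$ replaced by its stopped version. Since $H$ takes values in $[0,1]$ and $y_0^{R_t}\in[0,1]$ irrespective of explosion (with the convention $y_0^\infty=0$ for $y_0<1$, which also absorbs the explosion event), dominated convergence lets me send $K\to\infty$ and recover \eqref{eq:dualityRY} in full. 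This uniform boundedness of the duality function is exactly what makes the unbounded-rate duality go through.
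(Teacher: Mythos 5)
Your core step---the pointwise generator identity $\mathcal{A}H(\cdot,n)(y)=\mathcal{B}H(y,\cdot)(n)$---is exactly the paper's proof: the paper performs the same computation (split into selection and mutation parts, with the same bookkeeping via $\nu_0+\nu_1=1$ and $y^{\Delta}=0$) and then concludes by citing the generator criterion for duality (\citet[Thm.~3.42]{liggett2010}, \citet[Prop.~1.2]{jansen2014}), which is precisely the interpolation argument $G(r)=E[\,y(t-r;y_0)^{R_r}\mid R_0=n\,]$ that you write out by hand. So in substance you take the same route; you merely unroll the citation, which is a reasonable thing to do.

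The place where you go beyond the paper---the truncation argument for the unbounded rates---is also where your write-up has a concrete flaw, albeit a repairable one. First, the premise is off: $R$ cannot explode in finite time, since its rates \eqref{eq:Rrates} are linear in $k$, so $R$ is dominated by a Yule process with rate $s$ per line, which is non-explosive; what happens with positive probability when $\nu_0=0$, $u<s$ is $R_r\to\infty$ as $r\to\infty$, not explosion on $[0,t]$. Second, and more importantly, your intermediate claim that \eqref{eq:dualityRY} ``holds with $R_t$ replaced by its stopped version'' is false: stopping at level $K$ replaces the generator by $0$ on all states $n\geq K$, where the identity $\mathcal{A}H=\mathcal{B}H$ fails (the flow term $ny^{n-1}F(y)$ does not vanish there), so the function $G$ built from the stopped chain is \emph{not} constant and the stopped chain is \emph{not} dual to the flow. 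The standard repair keeps your structure intact: by the generator identity and Dynkin's formula applied up to $\tau_K:=\inf\{r:R_r\geq K\}$, the process $M_r:= y(t-r;y_0)^{R_r}$, $r\in[0,t]$, is a local martingale; since $|M_r|\leq 1$, it is a true martingale, and $E[M_t]=E[M_0]$ is exactly \eqref{eq:dualityRY}. (Equivalently, non-explosivity together with the bounds $|\mathcal{B}H(y,\cdot)(n)|\leq (s+u)\,n$ and $E[R_r\mid R_0=n]\leq n\e^{sr}$ justifies the interchange of differentiation and expectation directly.) With that correction your argument is complete, and it is in fact more explicit than the paper's on precisely the point that the cited criteria leave to the reader.
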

\begin{proof}
	We can consider~$\big(y(t;y_0)\big)_{t\geq 0}$ as a (deterministic) Markov process on~$[0,1]$ with generator $$
	\mathcal{A}_y^{}f(y)= \mathcal{A}_y^sf(y)+\mathcal{A}_y^uf(y)
	$$
	for~$f\in C^1([0,1],\R)$, where \begin{equation}
	\mathcal{A}_y^sf(y):=-sy(1-y)\frac{\partial f}{\partial y} \quad \text{and} \quad \mathcal{A}_y^uf(y):=[-u\nu_0y+u\nu_1(1-y)]\frac{\partial f}{\partial y}\label{eq:generatorYparts}\end{equation} correspond to selection and mutation, respectively. On the other hand, the infinitesimal generator of the line-counting process of the killed ASG reads \begin{equation}
	\mathcal{A}^{}_R\tilde{f}(n)=\mathcal{A}_R^s\tilde{f}(n)+\mathcal{A}_R^u\tilde{f}(n)\label{eq:generatorRparts}
	\end{equation} for~$\tilde{f}\in C_b(\N^{\Delta}_0,\R)$, where $$\mathcal{A}_R^s\tilde{f}(n):=ns[\tilde{f}(n+1)-\tilde{f}(n)]$$ and $$\mathcal{A}_R^u\tilde{f}(n):=nu\nu_1[\tilde{f}(n-1)-\tilde{f}(n)]+nu\nu_0[\tilde{f}(\Delta)-\tilde{f}(n)]$$ again correspond to selection and mutation, respectively. 
	Since~$H$ is continuous, it suffices to show that $$\mathcal{A}^{}_yH(\cdot,n)(y)=\mathcal{A}_R^{}H(y,\cdot)(n)\qquad \text{for}\  y\in [0,1]\ \text{and}\ n\in \N_0^{\Delta}$$ to prove the duality (see \citet[Thm.~3.42]{liggett2010} or \citet[Prop.~1.2]{jansen2014}). 
	This matching of the generators is a straightforward calculation and can be done individually for the selection and mutation parts; for example, 	$$\mathcal{A}_y^sH(\cdot,n)(y)=-ns[y^n-y^{n+1}]=\mathcal{A}_R^sH(y,\cdot)(n).$$
	Similarly,~$\mathcal{A}_y^uH(\cdot,n)(y)=\mathcal{A}_R^u H(y,\cdot)(n)$.  \qed
\end{proof}
Theorem~\ref{thm:dualityRY} provides a stochastic representation of the solution of the deterministic mutation-selection equation. It tells us that the killed ASG is indeed the right process to determine the current type distribution. To see this, set~$n=1$ and note that the right-hand side of~\eqref{eq:dualityRY} indeed equals the probability that a single individual at time~$t$ is of type~$1$: This is the case if either all lines have been pruned before time~$t$; or if all lines still alive at time~$t$ are assigned type~$1$ when sampling from the initial distribution with weights~$(1-y_0,y_0$), see Fig.~\ref{fig:Rstates}. 
\begin{remark}\label{rem:strongdualityforwardpicture}
	Theorem~\ref{thm:dualityRY} amounts to a weak duality between the forward and the backward process. We expect that this also holds pathwise (see \citet[Sect.~4]{jansen2014} for the corresponding notions). But in order to establish this strong kind of duality, one would need a particle construction of the forward process (such as a lookdown construction as in~\citet{donnelly1999}, but for the deterministic limit). This is beyond the scope of this article.
\end{remark}

We are particularly interested in the equilibrium~$\bar{y}$. Let us note in passing:
\begin{coro}
	$$\big(\bar{y}^{R_r}\big)_{r\geq 0} \text{is a martingale.}$$
\end{coro}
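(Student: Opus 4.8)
The plan is to verify that the bounded function $f\colon\N_0^{\Delta}\to\R$ defined by $f(n):=\bar y^{\,n}$ (with $f(\Delta):=0$, consistent with the duality function~\eqref{eq:dualityfct}) is \emph{harmonic} for the line-counting process, i.e.\ that $\mathcal A_R f\equiv 0$. Once this is established, the martingale property of $\bigl(\bar y^{R_r}\bigr)_{r\ge0}=\bigl(f(R_r)\bigr)_{r\ge0}$ follows at once: by Dynkin's formula the process $r\mapsto f(R_r)-\int_0^r\mathcal A_R f(R_v)\,dv$ is a martingale, and the integrand vanishes identically.

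First I would compute $\mathcal A_R f(n)$ directly from the generator~\eqref{eq:generatorRparts}. Substituting $f(n)=\bar y^{\,n}$ and $f(\Delta)=0$, and using $\nu_0+\nu_1=1$ to combine the two mutation contributions, I expect to obtain, after factoring out $n\,\bar y^{\,n-1}$,
\begin{equation*}
\mathcal A_R f(n)=n\,\bar y^{\,n-1}\bigl[s\bar y^{\,2}-(s+u)\bar y+u\nu_1\bigr].
\end{equation*}
The bracket is precisely the left-hand side of the equilibrium equation~\eqref{equieq}, of which $\bar y$ is a root by~\eqref{eq:stablepoint}; hence the bracket vanishes and $\mathcal A_R f(n)=0$ for every $n\in\N_0$ (and trivially at the absorbing states $0$ and $\Delta$, where the prefactor $n$, respectively the convention $f(\Delta)=0$, already forces the value).

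Since $\bar y\in[0,1]$, we have $0\le\bar y^{R_r}\le 1$ for all $r$, so $f$ is bounded and lies in the domain of $\mathcal A_R$; integrability is immediate, and there is no difficulty even in the regime $\nu_0=0,\ u<s$, where $R$ may grow to $\infty$ (the linear birth rate precludes explosion, and boundedness of $f$ takes care of the rest). This treatment of the cemetery state together with the possible unboundedness of $R$ is really the only point requiring a moment's care, so I do not anticipate a genuine obstacle.

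As a cross-check, the same conclusion can be read off from Theorem~\ref{thm:dualityRY}, which is presumably the intended route given the placement: taking $y_0=\bar y$ and using $y(t;\bar y)=\bar y$ for all $t$ (as $\bar y$ is an equilibrium), the duality~\eqref{eq:dualityRY} reduces to $E[\bar y^{R_t}\mid R_0=n]=\bar y^{\,n}$. Combining this identity with the Markov property and time-homogeneity of $(R_r)_{r\ge0}$ then yields $E[\bar y^{R_{r+t}}\mid\mathcal F_r]=\bar y^{R_r}$, which is exactly the martingale property.
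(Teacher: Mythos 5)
Your proposal is correct, but your primary route differs from the paper's. The paper proves this corollary as an immediate consequence of Theorem~\ref{thm:dualityRY}: setting $y_0=\bar{y}$ in \eqref{eq:dualityRY} and using $y(t;\bar{y})\equiv\bar{y}$ gives $E[\bar{y}^{R_r}\mid R_0=n]=\bar{y}^n$, so the conditional expectation is constant in $r$, and the martingale property follows with the Markov property --- exactly your ``cross-check'' paragraph, which you correctly guessed was the intended argument. Your main argument instead works at the generator level: you verify that $f(n)=\bar y^{\,n}$, $f(\Delta)=0$, is harmonic for $\mathcal{A}_R$, the bracket $s\bar y^{\,2}-(s+u)\bar y+u\nu_1$ vanishing because $\bar y$ solves \eqref{equieq}, and then invoke Dynkin's formula together with boundedness of $f$ and non-explosion of $R$ (linear rates). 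This computation is sound --- it is in essence the generator-matching calculation from the proof of Theorem~\ref{thm:dualityRY} specialised to the fixed point $y=\bar y$ --- and your attention to the cemetery state and to the transient regime $\nu_0=0$, $u<s$ is exactly where care is needed: there a domain/localisation argument (stop $R$ at exit times of finite sets, then use non-explosion and bounded convergence) is what turns the local martingale into a true one, which your boundedness remark covers. What the two approaches buy: the paper's proof is a one-line corollary that reuses the duality theorem already established; yours is self-contained, does not need Theorem~\ref{thm:dualityRY} at all, and makes the algebraic heart of the matter explicit --- namely that harmonicity of $\bar y^{\,n}$ is equivalent to $\bar y$ being a root of the equilibrium equation \eqref{equieq}, the same identity that reappears in the first-step decomposition \eqref{eq:fixed_point} of Proposition~\ref{eq:c1=bary}.
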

\begin{proof} 
	Setting~$y_0=\bar{y}$ in \eqref{eq:dualityRY} yields $$E[\bar{y}^{R_r} \mid R_0=n] = \bar{y}^n,$$
	which implies that the left-hand side does not depend on~$r$. \qed
\end{proof}
We now proceed to recover~$\bar{y}$ via the probabilistic backward picture. To this end, we take the limit~$t\to\infty$ in \eqref{eq:dualityRY}. This leads us to consider the asymptotic behaviour of~$R$, which is stated in the following Lemma.
\begin{lem}\label{lem:condabsorbR}
	\begin{enumerate}[label=(\roman*),leftmargin=25pt]
		\item If~$\nu_0=1$,~$R$ absorbs in~$\Delta$ with probability~$1$.
		\item If~$\nu_0\in(0,1)$,~$R$ absorbs in~$\{0,\Delta\}$ with probability~$1$. 
		\item If~$\nu_0=0$ and~$u<s$,~$R$ absorbs in~$0$ with probability~$<1$ and, conditional on non-absorption of~$R$ in~$0$,~$R_r\to\infty$ with probability~$1$.
		\item If~$\nu_0=0$ and~$u\geq s$,~$R$ absorbs in~$0$ with probability~$1$. 
	\end{enumerate}
\end{lem}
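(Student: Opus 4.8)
The plan is to exploit the special structure of the rates in \eqref{eq:Rrates}: every transition rate out of a state $k\in\N$ is proportional to~$k$, so the \emph{embedded jump chain} of~$R$ has transition probabilities that do not depend on~$k$. Concretely, from any transient state $k\geq 1$ the chain moves to $k+1$ with probability $p_+:=s/(s+u)$, to $k-1$ with probability $p_-:=u\nu_1/(s+u)$, and to~$\Delta$ with probability $p_\Delta:=u\nu_0/(s+u)$, where $p_++p_-+p_\Delta=1$. Thus the qualitative behaviour of~$R$ is governed by a single space-homogeneous nearest-neighbour walk on~$\N_0$ (absorbing at~$0$) subject to killing at constant per-step probability~$p_\Delta$. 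I would first record this reduction and then treat the four cases by standard random-walk arguments, translating the conclusions about the jump chain back to the continuous-time process at the end.

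For (i) and (ii) I would use the constant killing probability. Whenever $\nu_0>0$ we have $p_\Delta>0$, so the probability that the jump chain survives (avoids~$\Delta$) for its first~$m$ steps without having hit~$0$ is at most $(1-p_\Delta)^m\to 0$. Hence the chain is absorbed in $\{0,\Delta\}$ after a number of steps with a geometric tail, which is finite almost surely; this gives~(ii). In case~(i) we additionally have $\nu_1=0$, so $p_-=0$ and the only moves are up or to~$\Delta$; the chain therefore can never reach~$0$, and the same geometric argument forces absorption in~$\Delta$ with probability~$1$.

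For (iii) and (iv) we have $\nu_0=0$, hence $p_\Delta=0$ and $p_++p_-=1$: the jump chain is an ordinary nearest-neighbour walk on~$\N_0$ with absorbing barrier at~$0$ and up-probability $p_+=s/(s+u)$. When $u\geq s$ we have $p_-\geq p_+$, so the walk is recurrent (for $u=s$) or has drift towards~$0$ (for $u>s$); in both cases it hits~$0$ almost surely, giving~(iv). When $u<s$ we have $p_+>p_-$, so by the gambler's-ruin formula the walk started from~$n$ hits~$0$ with probability $(p_-/p_+)^n=(u/s)^n<1$, and on the complementary event the transient walk tends to~$+\infty$; this is~(iii).

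The last, and most delicate, step is to pass from the jump chain back to the continuous-time process~$R$. In the absorbing cases (i), (ii), (iv) it suffices to note that absorption occurs after finitely many jumps almost surely, so the absorption time is a finite sum of holding times and is therefore finite almost surely. The statement ``$R_r\to\infty$'' in~(iii) requires ruling out explosion: since the birth rates are $q^{}_R(k,k+1)=ks$ and $\sum_{k\geq 1}1/(ks)=\infty$, the process is non-explosive, so on the event of non-absorption it performs infinitely many jumps spread over all of $[0,\infty)$; as the embedded walk diverges to~$+\infty$ on this event, so does $R_r$ as $r\to\infty$. I expect this non-explosivity and conditioning argument—together with the boundary case $u=s$, where the hitting time of~$0$ has infinite mean yet is still almost surely finite—to be the point needing the most care, whereas the random-walk classification itself is routine.
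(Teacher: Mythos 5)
Your proof is correct, but it takes a genuinely different route from the paper's. The paper argues directly in continuous time for parts (i) and (ii) --- as long as at least one line is alive, the waiting time to the first beneficial mutation is exponential with parameter $u\nu_0$, hence finite almost surely, which forces absorption in $\{0,\Delta\}$ --- and then delegates (iii) and (iv) to the classical birth--death classification theory of Karlin and McGregor (their absorption criterion, and their Thm.~8 for the statement that a transient chain conditioned on non-absorption diverges). You instead exploit the observation that all rates out of state $k$ are proportional to $k$, so the embedded jump chain is a space-homogeneous nearest-neighbour walk with constant killing probability $u\nu_0/(u+s)$; this reduces (i)--(ii) to a geometric-tail killing bound and (iii)--(iv) to gambler's ruin and the law of large numbers, with no external citation needed. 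What your approach buys is a self-contained, elementary argument (and, as a bonus, the explicit absorption probability $(u/s)^n$ in case (iii), which the paper only derives later in Proposition~\ref{eq:c1=bary}); what it costs is that you must handle the time-change back to continuous time yourself, namely that absorption after finitely many jumps gives a finite absorption time, and that non-explosion is needed before concluding $R_r\to\infty$ in (iii). Your non-explosion argument is stated slightly loosely --- the criterion $\sum_k 1/(ks)=\infty$ is, as written, the criterion for a \emph{pure birth} process --- but the conclusion is sound, either by Reuter's explosion criterion for birth--death chains (the relevant series dominates $\sum_k 1/(ks)$) or by stochastic domination by a Yule process, which is exactly the device the paper uses for $L$ in the proof of Proposition~\ref{prop:asymptoticLbeh}.
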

\begin{proof}
	If~$\nu_0>0$, conditional on non-absorption of~$R$ in~$\{0,\Delta\}$, there is always at least one line in the killed ASG. The time to the first beneficial mutation on any given line is exponentially distributed with parameter~$u\nu_0$ and therefore finite almost surely. Hence, for any~$n\in \N$, $$P(R_r\notin\{0,\Delta\} \mid R_0=n)\leq P(R_r\neq \Delta\mid R_r\neq 0,R_0=n) \stackrel{r\to\infty}{\longrightarrow} 0.$$ This proves~$(ii)$. If~$\nu_0=1$, we have~$\nu_1=0$ and hence~$P(R_r=0\mid R_0=n)=0$ for all~$n\in \N$ and~$r\geq 0$. The same argument used for~$(ii)$ then leads to~$(i)$. 
	For the second statement of~$(iii)$, note that conditional on non-absorption of~$R$ in~$0$,~$R$ is transient and hence we can apply \citet[Thm.~8]{KarlinMcGregor57}. The other cases follow by the classical absorption criterion \citep[Sect.~5]{KarlinMcGregor57}.\qed
\end{proof}
Setting~$n=1$ in \eqref{eq:dualityRY} and taking the limit~$t\to\infty$, we directly obtain a representation of the equilibrium frequency~$\bar{y}$ in terms of the absorption probability of~$R$ in~$0$. 
\begin{coro}\label{coro:absorptionrbary}
	\begin{equation}\bar{y}=P(\lim_{r\to \infty} R_r=0 \mid R_0=1).\end{equation}
\end{coro}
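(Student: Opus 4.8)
The plan is to let $t\to\infty$ in the duality relation \eqref{eq:dualityRY} with $n=1$. Fix $y_0\in[0,1)$; then \eqref{eq:dualityRY} reads
\[
y(t;y_0)=E\big[y_0^{R_t}\mid R_0=1\big].
\]
On the left-hand side, the convergence statements collected in Section~\ref{sec:moran} give $y(t;y_0)\to\bar y$ as $t\to\infty$, and this holds for every $y_0\in[0,1)$ in all parameter regimes. This is precisely where the restriction $y_0<1$ matters: for $y_0=1$ in the case $\nu_0=0$, $u<s$ one has $y(t;1)\equiv1\neq\bar y$, so that boundary value must be avoided. The task is therefore to identify the limit of the right-hand side and to show that it equals $P(\lim_{r\to\infty}R_r=0\mid R_0=1)$.

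For the right-hand side I would pass the limit inside the expectation. The key input is Lemma~\ref{lem:condabsorbR}, which guarantees that, almost surely, the path $(R_r)_{r\geq 0}$ either absorbs in $0$, absorbs in $\Delta$, or diverges to $+\infty$. I would then identify the almost-sure limit of the integrand $y_0^{R_r}$ on each of these three events: on $\{R_r\to0\}$ the integrand is eventually $y_0^0=1$; on $\{R_r\to\Delta\}$ it is eventually $y_0^{\Delta}=0$; and on $\{R_r\to\infty\}$ it converges to $0$ because $y_0<1$. Hence
\[
y_0^{R_r}\longrightarrow \1_{\{\lim_{r\to\infty}R_r=0\}}\qquad\text{almost surely as } r\to\infty.
\]
Since $0\leq y_0^{R_r}\leq1$, bounded convergence lets me exchange limit and expectation, yielding $E[y_0^{R_t}\mid R_0=1]\to P(\lim_{r\to\infty}R_r=0\mid R_0=1)$. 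Combining the two sides gives the claim.

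I expect the only real obstacle to be the interchange of limit and expectation, and in particular the treatment of the growth event $\{R_r\to\infty\}$: it is essential that the integrand vanish there, which is exactly why one works with $y_0<1$ rather than $y_0=1$. A technically cleaner route avoids even the dominated-convergence argument: taking $y_0=0$, the integrand collapses to $y_0^{R_t}=\1_{\{R_t=0\}}$, so the identity becomes $y(t;0)=P(R_t=0\mid R_0=1)$; since $0$ is absorbing, the events $\{R_t=0\}$ increase to $\{\lim_{r\to\infty}R_r=0\}$, and continuity of the measure from below gives the limit directly. Either way the corollary follows.
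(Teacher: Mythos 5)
Your proof is correct and takes essentially the same route as the paper: set $n=1$ in the duality relation of Theorem~\ref{thm:dualityRY}, let $t\to\infty$, and use Lemma~\ref{lem:condabsorbR} to identify the limit of the right-hand side---the paper states this step without detail, while you supply the missing pieces (the restriction to $y_0<1$, the case analysis on the events $\{R_r\to 0\}$, $\{R_r\to\Delta\}$, $\{R_r\to\infty\}$, and the bounded-convergence argument). Your shortcut via $y_0=0$, where the integrand collapses to $\1_{\{R_t=0\}}$ and monotone continuity of the measure replaces dominated convergence, is a neat additional simplification but not a genuinely different method.
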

Therefore, we can now recover \eqref{eq:stablepoint} using only properties of~$R$. To calculate the absorption probabilities, let~$w_n:=P(\lim_{r\to \infty} R_r=0 \mid R_0=n)$. A first-step decomposition yields \begin{equation} w_n=\frac{s}{u+s}w_{n+1}+\frac{u\nu_1}{u+s}w_{n-1},\qquad n\geq 1,\label{eq:quadeqrt} \end{equation} together with~$w_{0}=1$ and~$w_{\Delta}=0$. It remains to solve \eqref{eq:quadeqrt}.
Due to the independence of the~$n$ lines, one has~$w_n=w_1^n,$ and it suffices to show the following.
\begin{prop}\label{eq:c1=bary}
	\begin{equation}\label{eq:w1}
	w_1= \begin{cases}
	\frac{1}{2}\left(1+\frac{u}{s}-\sqrt{\big(1-\frac{u}{s}\big)^2 + 4\nu_0\frac{u}{s}}\ \right), & s>0,\\
	\nu_1, &s=0.
	\end{cases}
	\end{equation}
\end{prop}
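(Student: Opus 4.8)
The plan is to collapse the whole first-step system \eqref{eq:quadeqrt} into a single scalar equation for $w_1$ by exploiting the identity $w_n=w_1^n$. This identity holds because the $n$ initially sampled lines spawn independent copies of the killed ASG started from one line, and absorption in $0$ occurs exactly when every one of these copies is pruned to extinction without ever incurring a beneficial mutation; the $n$ corresponding events are independent, each of probability $w_1$. Granting this, I would simply evaluate \eqref{eq:quadeqrt} at $n=1$ and substitute $w_2=w_1^2$ and $w_0=1$, obtaining $(u+s)w_1=sw_1^2+u\nu_1$, that is
\[
s\,w_1^2-(u+s)\,w_1+u\nu_1=0,
\]
which is precisely the equilibrium equation \eqref{equieq}. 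Hence $w_1$ must coincide with one of its roots, and the whole proof reduces to identifying the correct one.

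For $s=0$ the displayed equation is linear and yields at once $w_1=\nu_1$, settling that case. For $s>0$ the quadratic has exactly the two roots $\bar y$ and $y^\star$ recorded in \eqref{eq:stablepoint} and below it (a one-line algebraic check confirms that the discriminant of the quadratic matches $(1-u/s)^2+4\nu_0 u/s$ after factoring out $s^2$). The core of the argument is then selection by elimination, using that $w_1$ is a probability and therefore lies in $[0,1]$. Whenever $\nu_0>0$ we have $y^\star>1$, as noted after \eqref{eq:stablepoint}, so $\bar y$ is the only admissible root and $w_1=\bar y$. When $\nu_0=0$ and $u\ge s$, the roots degenerate to $\bar y=1$ and $y^\star=u/s\ge 1$, and Lemma~\ref{lem:condabsorbR}$(iv)$ gives $w_1=1$; one checks that the formula in \eqref{eq:w1} indeed reduces to $1$ in this regime.

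The delicate case, and the main obstacle, is $\nu_0=0$ with $u<s$, since then \emph{both} roots $\bar y=u/s$ and $y^\star=1$ lie in $[0,1]$, so positivity alone does not discriminate between them. Here I would invoke Lemma~\ref{lem:condabsorbR}$(iii)$, which guarantees that $R$ absorbs in $0$ with probability strictly less than $1$; thus $w_1<1=y^\star$, forcing $w_1=\bar y=u/s$. Assembling the cases reproduces \eqref{eq:w1} exactly. The only genuinely probabilistic input beyond the algebra of the quadratic is this last root selection, which is why the asymptotic dichotomy of Lemma~\ref{lem:condabsorbR} is indispensable to the argument.
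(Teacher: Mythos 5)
Your proposal is correct and follows essentially the same route as the paper: collapse \eqref{eq:quadeqrt} at $n=1$ via $w_n=w_1^n$ into the quadratic \eqref{equieq}, then select the correct root using the constraint $w_1\in[0,1]$, with Lemma~\ref{lem:condabsorbR} supplying the decisive input $w_1<1$ in the ambiguous case $\nu_0=0$, $u<s$. The only (immaterial) difference is bookkeeping: for $\nu_0=0$, $u\geq s$ you invoke Lemma~\ref{lem:condabsorbR}$(iv)$ to get $w_1=1$ directly, whereas the paper disposes of $u>s$ via $y^\star>1$ and of $u=s$ by noting the roots coincide.
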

\begin{remark}
	Note that, for~$\nu_0=0$, \eqref{eq:w1} reduces to $$w_1=\begin{cases}
	\min\big\{\frac{u}{s},1\big\}, &\text{if }s>0,\\
	1,&\text{if }s=0.
	\end{cases}$$
\end{remark}
\begin{proof}[Proof of Proposition~\ref{eq:c1=bary}] Using the product form of~$w_2$, \eqref{eq:quadeqrt} evaluated for~$n=1$ leads to 
	\begin{equation}\label{eq:fixed_point}
	sw_1^2-(u+s)w_1+u\nu_1=0,
	\end{equation}
	i.e.~$w_1$ satisfies Eq. \eqref{equieq}. In particular, for~$s=0$, one has~$w_1=\nu_1$. For~$s>0$, we get~$w_1\in\{\bar{y},y^\star\}$. In addition, if~$\nu_0>0$ or~$u>s$, we already know from Section~\ref{sec:moran} that~$y^\star>1$. Since~$w_1$ is a probability, we therefore have~$w_1=\bar{y}$. If~$\nu_0=0$ and~$u< s$, then~$y^\star=1$ and~$\bar{y}<1$. But Lemma~\ref{lem:condabsorbR} implies~$w_{1}<1$, so~$w_1=\bar{y}$. Finally, if~$\nu_0=0$ and~$s=u$, we have~$w_1=\bar{y}=y^\star$.\qed\end{proof}

Since~$w_1=\bar{y},$ Proposition~\ref{eq:c1=bary} is in accordance with Corollary~\ref{coro:absorptionrbary}. We have thus found the desired genealogical interpretation of the solution of the deterministic mutation-selection equation \eqref{eq:dlimitdiffeq} and, in particular, of its stable equilibrium~$\bar y$. Let us explicitly describe what happens in the special case~$\nu_0=0$, which brings about the bifurcation that corresponds to the error threshold.
In this case,~$\Delta$ cannot be accessed,~$R$ is a birth-death process with birth rate~$s$ and death rate~$u$, and~$w_1=\bar{y}$ corresponds to its extinction probability. Namely, for~$u \geq s$, the process dies out almost surely, whereas for~$u < s$, it survives with positive probability~$1 - u/s$ and then grows to infinite size almost surely. This is a classical result from the theory of branching processes \citep[Ch.~III.4]{athreyaney1972}: Indeed, for~$\nu_0=0$, \eqref{eq:fixed_point} is the fixed point equation~$w_1 = \varphi(w_1)$ for the generating function~$\varphi$ of the offspring distribution of a binary Galton-Watson process with probability~$u/(u + s)$ for no offspring and~$s/(u + s)$ for two offspring individuals. This connection sheds new light on the bifurcation observed in Section~\ref{sec:moran} and Fig.~\ref{fig:stationarydistribution}. Namely, let us consider the killed ASG starting from a single individual sampled from the equilibrium population (at some late time~$t$, say), so~$R_0 = 1$. If~$R$ converges to~$\infty$, then~$\lim_{r\to\infty} y_0^{R_r}=0$ for all~$y_0\in [0,1)$, so the sampled individual is of type 0; whereas~$y_0^{R_r}\equiv 1$ for~$y_0=1$ and~$r\geq 0$, which results in an individual of type 1. On the other hand, conditional on eventual absorption of~$R$ in~$0$,~$\lim_{r\to\infty}y_0^{R_r}=1$ for all~$y_0\in [0,1]$, which renders type 1 for the sampled individual.

\section{The pruned lookdown ASG in the deterministic limit}\label{s5}
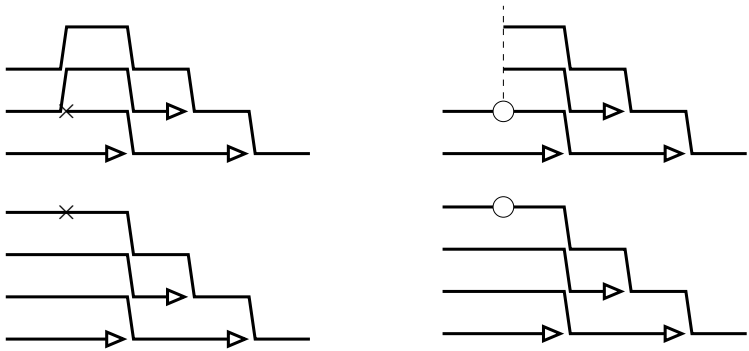
\begin{figure}[b]
	\begin{minipage}{0.5\textwidth} 
		\begin{center}
			\scalebox{.8}{\begin{tikzpicture}
				\draw[line width=.5mm ] (0,1.4) -- (.9,1.4) -- (1,2.1) -- (2,2.1) -- (2.1,1.4) -- (3,1.4) -- (3.1,.7) -- (4,.7) -- (4.1,0) -- (5,0);
				\draw[-{open triangle 45[scale=2.5]},line width=.5mm] (1,.7) -- (2,.7) -- (2.1,0) -- (4,0);
				\draw[-{open triangle 45[scale=2.5]},line width=.5mm] (0,.7) -- (0.9,.7) -- (1,1.4) -- (2,1.4) -- (2.1,.7) -- (3,.7);
				\draw[-{open triangle 45[scale=2.5]},line width=.5mm] (0,0) -- (2,0);
				\node at (1,.7) {\scalebox{1.6}{$\times$}} ;
				
				\draw[dashed,opacity=0] (1,.7) --(1,2.45);
				\end{tikzpicture}}
		\end{center}
	\end{minipage}
	\begin{minipage}{0.45\textwidth} 
		\begin{center}
			\scalebox{.8}{\begin{tikzpicture}
				\draw[line width=.5mm ] (1,2.1) -- (2,2.1) -- (2.1,1.4) -- (3,1.4) -- (3.1,.7) -- (4,.7) -- (4.1,0) -- (5,0);
				\draw[-{open triangle 45[scale=2.5]},line width=.5mm] (0,.7) -- (1,.7) -- (2,.7) -- (2.1,0) -- (4,0);
				\draw[-{open triangle 45[scale=2.5]},line width=.5mm] (1,1.4) -- (2,1.4) -- (2.1,.7) -- (3,.7);
				\draw[-{open triangle 45[scale=2.5]},line width=.5mm] (0,0) -- (2,0);
				\draw[dashed,opacity=100] (1,.7) --(1,2.45);
				\draw (1,.7) circle (1.7mm)  [fill=white!100];
				\end{tikzpicture}}
		\end{center}
	\end{minipage}
	\hfill \\ ~~\vspace{.4cm} \\
	\begin{minipage}{0.5\textwidth} 
		\begin{center}
			\scalebox{.8}{\begin{tikzpicture}
				\draw[line width=.5mm ] (0,2.1) -- (1,2.1) -- (2,2.1) -- (2.1,1.4) -- (3,1.4) -- (3.1,.7) -- (4,.7) -- (4.1,0) -- (5,0);
				\draw[-{open triangle 45[scale=2.5]},line width=.5mm] (0,.7) -- (2,.7) -- (2.1,0) -- (4,0);
				\draw[-{open triangle 45[scale=2.5]},line width=.5mm] (0,1.4) -- (1,1.4) -- (2,1.4) -- (2.1,.7) -- (3,.7);
				\draw[-{open triangle 45[scale=2.5]},line width=.5mm] (0,0) -- (2,0);
				\node at (1,2.1) {\scalebox{1.6}{$\times$}} ;
				
				\draw[dashed,opacity=0] (1,.7) --(1,2.45);
				\end{tikzpicture}}
		\end{center}
	\end{minipage}
	\begin{minipage}{0.45\textwidth} 
		\begin{center}
			\scalebox{.8}{\begin{tikzpicture}
				\draw[line width=.5mm ] (0,2.1) -- (1,2.1) -- (2,2.1) -- (2.1,1.4) -- (3,1.4) -- (3.1,.7) -- (4,.7) -- (4.1,0) -- (5,0);
				\draw[-{open triangle 45[scale=2.5]},line width=.5mm] (0,.7) -- (2,.7) -- (2.1,0) -- (4,0);
				\draw[-{open triangle 45[scale=2.5]},line width=.5mm] (0,1.4) -- (1,1.4) -- (2,1.4) -- (2.1,.7) -- (3,.7);
				\draw[-{open triangle 45[scale=2.5]},line width=.5mm] (0,0) -- (2,0);
				\draw (1,2.1) circle (1.7mm)  [fill=white!100];
				\end{tikzpicture}}
		\end{center}
	\end{minipage}
	\caption{The pruned lookdown ASG: Pruning due to a deleterious mutation on a line that is not at the top~(top left); pruning of all lines above a beneficial mutation~(top right); a deleterious and a beneficial mutation at the top line, which do not affect the number of potential ancestors (bottom).}
	\label{fig:pldasgtransitions}
\end{figure}
Let us now turn to the type of the \emph{ancestor} of a single individual from the equilibrium population. This is a more involved problem than identifying the (stationary) type distribution of the forward process, because we now must identify the parental branch (incoming or continuing, depending on the type) at every branching event, which requires nested case distinctions. Furthermore, some ancestral lines must be traced back beyond the first mutation. Nevertheless, mutations may still rule out certain potential ancestors. To describe this, \citet{Cordero2017590} extended the pruned lookdown ASG (pLD-ASG) of \citet{Lenz201527} to the framework of the deterministic limit. Let us recall the idea behind this process. The pLD-ASG starts from a single individual. The lines of the graph correspond to the potential ancestors and are assigned consecutive levels, starting at level~$1$ (see Fig.~\ref{fig:pldasgtransitions}). If a line is hit by a selective arrow, its level is increased by one and at the same time all lines above it are shifted up one level; thereby making space for the incoming line, which then occupies the former level of the line it hit. If the first event on a line that does not occupy the top level is a mutation to type~$1$, we can conclude that it will not be ancestral, since it will, at a later time, play the role of an unsuccessful incoming line, for its type is~$1$ due to the mutation. Hence we can cut away this line. The line occupying the top level is exempt from the pruning since, regardless of its type, this line will be ancestral if all lines below it are non-ancestral. If a line that is not the top line has a mutation to type~$0$, we can cut away all lines above it, because this line will, at some stage, be an incoming line and will, due to the mutation, succeed against lines above it. If the top line is hit by a mutation to type~$0$, this does not have an effect. This motivates the following definition. 
\begin{definition}\label{def:pLDASG}
	The \textit{pruned lookdown ASG in the deterministic limit} starts at time~$r=0$ and proceeds in direction of increasing~$r$. At each time~$r$, the graph consists of a finite number~$L_r$ of lines. 
	The lines are numbered by the integers~$1,\ldots, L_r$, to which we refer as \emph{levels}. The process then evolves via the following transitions.
	\begin{enumerate} 
		\item
		Every line~$i \leqslant L_r$ branches at rate~$s$ and a new line, namely the incoming branch, is inserted at level~$i$ and all lines at levels~$k\geqslant i$ 
		are pushed one level upward to~$k+1$; in particular, the continuing branch is shifted from level~$i$ 
		to~$i+1$. $L_r$ increases to~$L_{r}+1$. 
		\item 
		Every line~$i \leqslant L_r$ experiences deleterious mutations at rate~$u \nu_1$. If~$i=L_r$, nothing happens. If~$i<L_r$, the line at level~$i$ is pruned, and the lines above it slide down to `fill the gap', rendering the transition from~$L_r$ to~$L_{r}-1$. 
		\item 
		Every line~$i \leqslant L_r$ experiences beneficial mutations at rate~$u \nu_0$. All the lines at levels~$> i$ are pruned, resulting in a transition from~$L_r$ to~$i$. Thus, no pruning happens if a beneficial mutation occurs on level~$L_r$.
	\end{enumerate}
	All the events occur independently on every line. We call~$L=(L_r)_{r\geq0}$ the line-coun\-ting process of the pLD-ASG.
\end{definition}
The line-counting process~$L$ is a Markov chain on~$\N$, the transition rates of which result directly from the definition as
\begin{equation}\label{eq:pLDASGrates}
q^{}_L(n,n+1) = n s, \quad q^{}_L(n,n-1) = (n-1) u \nu^{}_1 + \1_{\{n>1\}} u \nu^{}_0, \quad q^{}_L(n,n-\ell) = u \nu^{}_0, 
\end{equation}
where~$2\leq\ell<n$. 
\begin{remark}
	For later use, we do not insist on starting from a single individual; but one should keep in mind that if we start the process with~$n>1$ lines, then it does not correctly describe the ancestry of~$n$ individuals. For example, assume that the first event is a beneficial mutation on line~$1$. This induces pruning of all other lines, which is incompatible with the ancestry of~$n$ individuals.
\end{remark}For any given~$r>0$, a hierarchy is, by construction, imposed on the lines of the graph, such that if one line is~$0$, the lowest line occupied by a type-$0$ individual is the true ancestral line. In particular, the ancestor at time~$r$ is then of type~$0$. If all lines are occupied by individuals of type~$1$, the top line is the true ancestral line and the ancestor at time~$r$ is of type~$1$. In the finite Moran model and in the diffusion limit, the line that is ancestral if all potential ancestors are of type~$1$ is called immune (the name originates from the immunity to pruning by deleterious mutations); in the deterministic limit, the immune line is always the top line. 

The above rationale may be used to determine the ancestor's type at any time~$t=r$; but explicit results require the limit~$r \to \infty$. We therefore now consider the
asymptotic behaviour of~$L_r$. Recall that we assume~$u>0$ throughout. 
\begin{prop} \label{prop:asymptoticLbeh}
	\begin{enumerate}[label=(\roman*),leftmargin=25pt]
		\item If~$s=0$,~$L$ absorbs in~$1$ almost surely.
		\item If~$u<s$ and~$\nu_0=0$,~$L$ is transient, so~$L_r \to \infty$ almost surely as~$r \to \infty$.
		\item If~$u=s$ and~$\nu_0=0$,~$L$ is null recurrent.
		\item If~$u> s$ or~$\nu_0>0$,~$L$ is positive recurrent and the stationary distribution is geometric with parameter~$1-p$, where $$p=\begin{cases}
		\frac{s}{u\nu_1}\bar{y}, &\text{if }\nu_1>0,\\
		\frac{s}{u+s},&\text{if }\nu_1=0.	\end{cases}$$
	\end{enumerate}
\end{prop}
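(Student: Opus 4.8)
The plan is to isolate the degenerate case $s=0$ and then, for $s>0$, to use that $L$ is irreducible on $\N$ and decide its recurrence type by exhibiting, or excluding, a stationary probability measure of geometric form—separating null recurrence from transience, where needed, by the classical birth-death series. Case~(i) is immediate: when $s=0$ there is no branching, so from every state $n\geq2$ all transitions are downward with total rate $(n-1)u>0$, while state~$1$ has no outgoing transitions and is absorbing; starting from $n$, the chain is non-increasing in $\{1,\dots,n\}$ and reaches $1$ after finitely many jumps almost surely.

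For $s>0$ the chain is irreducible on $\N$: every state reaches $n+1$ by a branching event and reaches each lower level either by successive deleterious mutations (if $\nu_1>0$) or by a single beneficial mutation (if $\nu_1=0$). I would then test the ansatz $\pi_n=(1-p)p^{n-1}$. The one feature distinguishing $L$ from a birth-death chain is the family of long-range downward jumps $n\to n-\ell$ ($\ell\geq2$) from beneficial mutations, which rules out detailed balance and forces the full balance equations. Writing $\bar\Pi_n:=\sum_{m>n}\pi_m$, the balance equation at level $n\geq2$ reads
$$\pi_n\big[ns+(n-1)u\big]=(n-1)s\,\pi_{n-1}+nu\nu_1\,\pi_{n+1}+u\nu_0\,\bar\Pi_n,$$
with the boundary relation $s\,\pi_1=u\nu_1\,\pi_2+u\nu_0\,\bar\Pi_1$ at $n=1$. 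The decisive point is that the geometric tail is again geometric, $\bar\Pi_n=p^{\,n}$; substituting and matching the coefficient of $n$ and the constant term collapses every balance equation to the single quadratic
$$u\nu_1\,p^2-(u+s)\,p+s=0.$$
I would identify its relevant root as follows: for $\nu_1=0$ it is linear with root $p=s/(u+s)$; for $\nu_1>0$, note that $p=1/y$ solves it precisely when $y$ solves \eqref{equieq}, so its roots are $1/\bar y$ and $1/y^\star$, and the product relation $\bar y\,y^\star=u\nu_1/s$ gives $1/y^\star=s\bar y/(u\nu_1)$, matching the stated $p$. Since $1/\bar y\geq1$ while $1/y^\star\in(0,1)$ iff $y^\star>1$, and $y^\star>1$ holds exactly when $\nu_0>0$ or $u>s$ (Section~\ref{sec:moran}), a summable stationary probability measure of the claimed form exists precisely in case~(iv). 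Irreducibility together with non-explosion—immediate because the chain increases only in unit steps at rate $ns$ and $\sum_n 1/(ns)=\infty$—then yields positive recurrence and the geometric law.

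It remains to separate (ii) from (iii), where $\nu_0=0$ and the long-range jumps vanish, so that $L$ is a genuine birth-death chain with $\lambda_n=ns$ and $\mu_n=(n-1)u$; detailed balance telescopes its potential coefficients to $\pi_n=(s/u)^{n-1}$. The chain is recurrent iff $\sum_n(\lambda_n\pi_n)^{-1}=\sum_n(u/s)^{n-1}/(ns)=\infty$, which holds for $u=s$ and fails for $u<s$, and positive recurrent iff $\sum_n\pi_n<\infty$, which fails for $u\leq s$. Hence $u<s$ is transient, giving $L_r\to\infty$ almost surely, while $u=s$ is null recurrent.

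The step deserving the most care—and the main obstacle—is case~(iv) with $\nu_0>0$: detailed balance is unavailable, so one cannot simply read off a reversible measure. The whole argument rests on the observation that the geometric tail $\bar\Pi_n=p^n$ reduces the nonlocal balance equations to the very quadratic \eqref{equieq} (under the inversion $p\mapsto1/y$) that governs the forward equilibria, which is what pins $p$ to $1/y^\star=s\bar y/(u\nu_1)$ and ties the backward stationary law to the stable and unstable equilibria $\bar y$ and $y^\star$.
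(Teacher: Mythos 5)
Your proof is correct and follows essentially the same route as the paper's: case (i) directly, cases (ii)--(iii) via the standard birth--death series criteria (the paper cites Karlin--McGregor for exactly this), and case (iv) by exhibiting the geometric invariant distribution and combining it with non-explosion (domination by Yule rates) to conclude positive recurrence, which is the paper's Norris-based argument. The only difference is one of detail: you write out the balance equations with the geometric tail and identify $p=1/y^\star=s\bar{y}/(u\nu_1)$ via Vieta's formulas applied to \eqref{equieq}, steps the paper compresses into ``one easily checks that the claimed geometric distribution is invariant.''
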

\begin{remark}\label{rem:p}
	The parameter of the geometric distribution~$p=p(u,s,\nu_1)$ is a function of~$u,s,$ and~$\nu_1$. Explicitly, it is given by 
	\begin{equation} p(u,s,\nu_1)=\begin{cases}
	\frac{1}{2}\Big(\frac{u+s}{u\nu_1}-\sqrt{\big(\frac{u+s}{u\nu_1}\big)^2-4\frac{s}{u\nu_1}}\ \Big) , &\text{if }\nu_1>0,\\
	\frac{s}{u+s},&\text{if }\nu_1=0.	\end{cases}\end{equation}
	It is continuous in~$\nu_1$, i.e.~$\lim_{\nu_1\to 0}p(u,s,\nu_1)=p(u,s,0).$ 
\end{remark}
\begin{remark}
	A proof for case~$(iv)$, for~$\nu_1>0$, is given in \citet[Lem.~5.3]{Cordero2017590}.
\end{remark}

\begin{proof}[Proof of Proposition~\ref{prop:asymptoticLbeh}]
	Case~$(i)$ is trivial. Cases~$(ii)$ and~$(iii)$ are straightforward applications of \citet[Thm.~1,Thm.~2]{KarlinMcGregor57}.
	For case~$(iv)$, note that~$L$ is stochastically dominated by a Yule process with branching rate~$s$. This Yule process is non-explosive. 
	One easily checks that the claimed geometric distribution is invariant. Every process which is non-explosive and has an invariant distribution is positive recurrent, see \citet[Thm.~3.5.3]{norris1998markov}. The uniqueness of the stationary distribution follows from \citet[Thm.~3.5.2]{norris1998markov}.\qed
\end{proof}
In what follows, the asymptotic tail probabilities of~$L_r$ are crucial. Let $$a_n:=\lim_{r\to\infty} P_1(L_{r}>n)$$ if this limit exists (the subscript denotes the initial value). We first focus on the positive recurrent case where we know the limit exists. 
\begin{prop} \label{prop:fearnheadrec}
	If~$L$ is positive recurrent, the coefficients~$(a_n)_{n\geq 0}$ satisfy	\begin{equation}
	a^{}_n=\frac{s}{u+s}a^{}_{n-1}+\frac{u\nu_1}{u+s}a^{}_{n+1},\qquad n\in \N,\label{eq:fearnheadrecursion}\end{equation}
	with boundary condition~$a^{}_0=1$ and~$\lim_{n\to \infty}a^{}_{n}=0$.
\end{prop}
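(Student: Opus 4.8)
The plan is to read off \eqref{eq:fearnheadrecursion} directly from the \emph{stationary flow balance} of~$L$ across a level cut, using only the transition rates \eqref{eq:pLDASGrates}; the explicit geometric law from Proposition~\ref{prop:asymptoticLbeh}$(iv)$ will not be needed for the argument, though it offers a shortcut verification.

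\textbf{Setup and boundary conditions.} Since $L$ is assumed positive recurrent, it is an irreducible positive-recurrent continuous-time Markov chain on~$\N$ with a unique stationary distribution~$\pi=(\pi_m)_{m\geq1}$, and $P_1(L_r=k)\to\pi_k$ for every~$k$. Writing $a_n=\sum_{m>n}\pi_m$ for the stationary tail, the complementary \emph{finite} sum $P_1(L_r\leq n)=\sum_{k\leq n}P_1(L_r=k)\to\sum_{k\leq n}\pi_k$ converges, so $a_n=\lim_{r\to\infty}P_1(L_r>n)$ exists and equals $\sum_{m>n}\pi_m$. The two boundary conditions are then immediate: $a_0=1$ because $L_r\geq1$ deterministically, and $a_n\to0$ because $(\pi_m)$ is summable.

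\textbf{Flow balance across a cut.} First I would fix $n\geq1$ and balance, in stationarity, the probability flux between the sets $A_n=\{1,\dots,n\}$ and $B_n=\{n+1,n+2,\dots\}$. Upward transitions are only branchings $m\to m+1$ at rate $ms$, so the sole way to cross from $A_n$ into $B_n$ is the jump $n\to n+1$, giving upward flux $ns\,\pi_n$. The delicate part is the downward flux from $B_n$ into $A_n$, because beneficial mutations produce long-range jumps $m\to m-\ell$. Reading off \eqref{eq:pLDASGrates}: from the boundary state $m=n+1$ every downward jump lands in $A_n$, so its full downward rate $(m-1)u=nu$ contributes; from any deeper state $m\geq n+2$ the down-by-one transition (of rate $(m-1)u\nu_1+\1_{\{m>1\}}u\nu_0$) stays in $B_n$, and exactly the beneficial jumps with $\ell\in\{m-n,\dots,m-1\}$ cross --- these are $n$ values, each of rate $u\nu_0$, independently of~$m$. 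Hence the total downward flux is $nu\,\pi_{n+1}+nu\nu_0\sum_{m\geq n+2}\pi_m$, and flow balance reads
\[
ns\,\pi_n \;=\; nu\,\pi_{n+1}\;+\;nu\nu_0\sum_{m\geq n+2}\pi_m .
\]

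\textbf{Assembling the recursion and the main obstacle.} Dividing by~$n$ and substituting $\sum_{m\geq n+2}\pi_m=a_{n+1}$, $\pi_{n+1}=a_n-a_{n+1}$, and $\pi_n=a_{n-1}-a_n$, the identity becomes $s(a_{n-1}-a_n)=u(a_n-a_{n+1})+u\nu_0 a_{n+1}$; collecting terms and using $\nu_0+\nu_1=1$ to combine $-u a_{n+1}+u\nu_0 a_{n+1}=-u\nu_1 a_{n+1}$ gives $(u+s)a_n=s\,a_{n-1}+u\nu_1\,a_{n+1}$, which is exactly \eqref{eq:fearnheadrecursion} (note the structural mirror with \eqref{eq:quadeqrt} for the killed ASG). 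I expect the only genuine subtlety to be the bookkeeping of the downward crossing flux: one must verify that the mixed down-by-one rate crosses the cut only from $m=n+1$, and carefully count that precisely $n$ of the long-range beneficial jumps from each $m\geq n+2$ land in $A_n$ regardless of~$m$. Once this count is pinned down, the final cancellation via $\nu_0+\nu_1=1$ is routine. If one prefers to bypass the combinatorics, an alternative is to insert the geometric tail $a_n=p^n$ from Proposition~\ref{prop:asymptoticLbeh}$(iv)$ into \eqref{eq:fearnheadrecursion} and check it against the defining relation for~$p$; but the cut argument is more transparent and does not presuppose the explicit stationary law.
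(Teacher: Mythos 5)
Your proof is correct, but it takes a genuinely different route from the paper. You work forward in the state space: positive recurrence gives a stationary law $\pi$, the tail limits $a_n$ exist as stationary tails, and the recursion \eqref{eq:fearnheadrecursion} drops out of the global-balance (cut-set) equation between $\{1,\dots,n\}$ and $\{n+1,n+2,\dots\}$, computed purely from the rates \eqref{eq:pLDASGrates}; your bookkeeping of the downward flux (full rate $nu$ from the boundary state $n+1$, and exactly $n$ long-range beneficial jumps of rate $u\nu_0$ from each $m\geq n+2$) is the crux, and it is right. The paper instead argues graphically and backward in time: on $\{L_r>n\}$ none of the first $n$ lines is the immune (top) line, so the last event on those lines before backward time $r$ is a branching, a deleterious pruning, or a beneficial pruning with probabilities $s/(u+s)$, $u\nu_1/(u+s)$, $u\nu_0/(u+s)$; relating $\{L_r>n\}$ to $\{L_{T(r)-}>n-1\}$, $\{L_{T(r)-}>n+1\}$, or the empty event respectively, and letting $r\to\infty$, yields the same recursion. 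What each buys: your argument is more mechanical and self-contained as Markov chain theory (no appeal to the graphical construction, no last-event conditioning or independence claims to justify), and it makes transparent why $\nu_0$ disappears except through $u=u\nu_0+u\nu_1$; the paper's argument carries the genealogical interpretation (the role of the immune line explains the weights $s/(u+s)$ and $u\nu_1/(u+s)$) and matches in spirit the later Siegmund-duality derivation via $D$ (Proposition~\ref{prop:siegmunddual} and Corollary~\ref{coro:DtabsorbsLt}). One small caveat: your opening claim of irreducibility fails in the degenerate case $s=0$, which formally lies in the scope of Proposition~\ref{prop:asymptoticLbeh}$(iv)$ since $u>0$; there $L$ absorbs in $1$, $\pi=\delta_1$, and both the tail limits ($a_0=1$, $a_n=0$ for $n\geq1$) and the recursion hold trivially, so nothing breaks, but the statement should be qualified.
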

\begin{remark}
	Recursion \eqref{eq:fearnheadrecursion} is the analogue to Fearnhead's recursion \citep{fearnhead2002common} in the deterministic limit.
\end{remark}
\begin{remark}\label{rem:interchangesnu1}
	If we interchange the roles of~$s$ and~$u\nu_1$ in \eqref{eq:fearnheadrecursion} and replace the boundary condition~$\lim_{n\to\infty}a_n=0$ by~$a_{\Delta}=0$, we obtain the recursion for~$w^{}_n$ in \eqref{eq:quadeqrt} (note that~$u=u\nu_0+u\nu_1$ such that~$u+s$ is invariant under the interchange of~$s$ and~$u\nu_1$).
\end{remark}
\begin{figure}[t]
	\begin{minipage}{0.32\textwidth} 
		\begin{center}
			\scalebox{.8}{\begin{tikzpicture}
				\draw[line width=.5mm ] (0,.7) -- (2,.7) -- (2.1,0) -- (3,0);
				\draw[line width=.5mm ] (0,2) -- (2,2) -- (2.1,1.3) -- (3,1.3);
				
				\draw (1,1) circle (.7mm)   [fill=black!100];
				\draw (1,1.3) circle (.7mm)  [fill=black!100];
				\draw (1,1.6) circle (.7mm)    [fill=black!100];
				\draw[-{open triangle 45[scale=2.5]},line width=.5mm] (0,0) -- (2,0);
				\node[left] at (0,2) {$n+1$};
				\node[right] at (3,1.3) {$n$};
				\draw[dashed,opacity=0] (1,0) --(1,2.7);
				\draw[dotted,opacity=1] (2,0) --(2,-.5);
				\node[left] at (2,-0.5) {$T$};
				\draw[dotted,opacity=1] (2.2,0) --(2.2,-.5);
				\node[right] at (2.2,-0.5) {$T-$};
				\end{tikzpicture}}
		\end{center}
	\end{minipage}
	\hfill
	\begin{minipage}{0.32\textwidth} 
		\begin{center}
			\scalebox{.8}{\begin{tikzpicture}
				\draw[line width=.5mm ] (0,0) -- (2,0) -- (2.1,.7) -- (3,.7);
				\draw[line width=.5mm ] (0,1.3) -- (2,1.3) -- (2.1,2) -- (3,2);
				\draw[line width=.5mm ] (0,2) -- (2,2) -- (2.1,2.7) -- (3,2.7);
				\draw[line width=.5mm ] (2.1,0) -- (3,0);
				
				\draw (1,.3) circle (.7mm)  [fill=black!100];
				\draw (1,.6) circle (.7mm)  [fill=black!100];
				\draw (1,.9) circle (.7mm)  [fill=black!100];
				\node at (2.1,0) {\scalebox{1.6}{$\times$}} ;
				\node[left] at (0,2) {$n+1$};
				\node[right] at (3,2.7) {$n+2$};
				\draw[dashed,opacity=0] (1,0) --(1,2.7);
				\draw[dotted,opacity=1] (2.1,0) --(2.1,-.5);
				\node[left] at (2.1,-0.5) {$T$};
				\draw[dotted,opacity=1] (2.3,0) --(2.3,-.5);
				\node[right] at (2.3,-0.5) {$T-$};
				\end{tikzpicture}}
		\end{center}
	\end{minipage}\hfill 
	\begin{minipage}{0.32\textwidth} 
		\begin{center}
			\scalebox{.8}{\begin{tikzpicture}
				\draw[line width=.5mm ] (0,0) -- (3,0);
				\draw[line width=.5mm ] (0,.7) -- (3,.7);
				\draw[line width=.5mm ] (2,2) -- (3,2);
				\draw[line width=.5mm, opacity=0.2] (0,2) -- (2,2);
				
				\draw (1,1) circle (.7mm)  [fill=black!100];
				\draw (1,1.3) circle (.7mm)  [fill=black!100];
				\draw (1,1.6) circle (.7mm)  [fill=black!100];
				\node[left] at (0,2) {$n+1$};
				\node[right] at (3,2) {$\lightning$};
				\draw[dashed,opacity=0] (1,0) --(1,2.7);
				
				\draw[dashed,opacity=100] (2,.7) --(2,2.7);
				\draw (2,.7) circle (2mm)  [fill=white!100];
				\draw[dotted,opacity=1] (2,0) --(2,-.5);
				\node[left] at (2,-0.5) {$T$};
				\draw[dotted,opacity=1] (2.3,0) --(2.3,-.5);
				\node[right] at (2.3,-0.5) {$T-$};
				\end{tikzpicture}}
		\end{center}
	\end{minipage}
	\caption{The first event on the first~$n$ (out of at least~$n+1$) lines may be a branching (left), a pruning due to a deleterious mutation (center), or a pruning due to a beneficial mutation (right).}
	\label{fig:pldsasgfearnheadproof}
\end{figure}
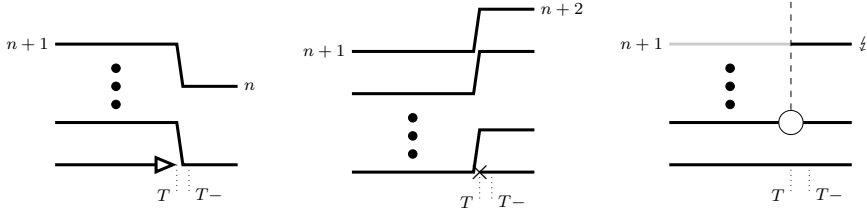
\begin{proof}[Proof of Proposition~\ref{prop:fearnheadrec}]
	We give a direct proof via the graphical construction (see Fig.~\ref{fig:pldsasgfearnheadproof}). The coefficients, as tail probabilities of a stationary distribution, satisfy the boundary conditions. Fix some~$r>0$ and~$n\in \N$. We now look at the last events before~$r$ in backward time; which correspond to the first events after~$r$ in forward time. Let~$T_s({r}), T_{\nu_0}({r})$, and~$T_{\nu_1}({r})$ be the (backward) times of the last selective, beneficial, and deleterious mutation event, respectively, that have occurred before time~${r}$ on the first~$n$ levels. Set~$T({r}):=\max\{T_s({r}),T_{\nu_0}({r}),T_{\nu_1}({r})\}$. On~$\{L_r>n\}$, we have that~$T(r)$ is positive. Furthermore, \begin{align*}
	P(L_{r}>n)=&\ P\big(L_{r}>n, T(r)=T_s(r)\big)+P\big(L_{r}>n, T(r)=T_{\nu_1}(r)\big)\\
	&+P\big(L_{r}>n, T(r)=T_{\nu_0}(r)\big).
	\end{align*}
	Let~$L_{T(r)-}:=\lim_{\tilde{r}\nearrow T(r)}L_{\tilde{r}}$ be the state `just before' the jump. Reading each transition in Fig.~\ref{fig:pldsasgfearnheadproof} from left to right, one concludes the following. If~$T(r)=T_s(r)$, then~$L_{r}>n$ if and only if~$L_{T(r)-}>n-1$. If~$T(r)=T_{\nu_1}(r)$, then~$L_{r}>n$ if and only if~$L_{T(r)-}>n+1$. The case~$T(r)=T_{\nu_0}(r)$ contradicts~$L_{r}>n$, so~$P\big(L_{r}>n,\ T(r)=T_{\nu_0}(r)\big)=0$. On~$\{L_{r}>n\}$, none of the first~$n$ lines is the immune line and therefore the probability that the last event is a selection event or a pruning due to a deleterious mutation is~$s/(u+s)$ and~$u\nu_1/(u+s)$, respectively. Hence,
	\begin{align*} P(L_{r}>n)=&\ \frac{s}{u+s}P\big(L_{T({r})-}>n-1\mid T({r})=T_s({r})\big) \\
	&+ \frac{u\nu_1}{u+s}P\big(L_{T({r})-}>n+1\mid T({r})=T_{\nu_1}({r})\big).\end{align*}
	But~$L_{T(r)-}$ is independent of what happens at time~$T(r)$, since this is in the future (in~$r$-time). Taking~${r}\to \infty$ on both sides proves the assertion.\qed
\end{proof}

If~$L$ is positive recurrent, we denote by~$L_{\infty}$ a random variable on~$\N$ distributed according to the stationary distribution of the line-counting process. Directly solving the recurrence relation \eqref{eq:fearnheadrecursion} leads to the geometric distribution of~$L_{\infty}$. Here, we take a different route. We derive the memoryless property of~$L_{\infty}$ and conclude that the distribution is geometric, since this is the only discrete distribution without memory. 
\begin{prop}[Lack of memory property] \label{prop:lackofmemory}
	If~$L$ is positive recurrent, then for all~$k\in \N_0$, \begin{equation} P(L_{\infty}>n+k \mid L_{\infty}>n)=P(L_{\infty}>k).\end{equation}
	In particular, \begin{equation}L_{\infty}\sim \Geom(1-p),\quad \text{with}\ p=\begin{cases}
	\frac{s}{u\nu_1}\bar{y}, &\text{if }\nu_1>0,\\
	\frac{s}{u+s},&\text{if }\nu_1=0.	\end{cases} \label{eq:stationarydistributionLinf} \end{equation}
\end{prop}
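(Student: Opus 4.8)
The plan is to establish the multiplicative identity $a_{n+k}=a_n a_k$ for the tail probabilities $a_n=P(L_\infty>n)$ and then to use that a survival function on $\N$ which factorises in this way must be geometric. The driving observation is that Fearnhead's recursion \eqref{eq:fearnheadrecursion} from Proposition~\ref{prop:fearnheadrec} is precisely the harmonic equation governing the absorption probability of a \emph{spatially homogeneous, killed, nearest-neighbour random walk}; the factorisation then drops out of the strong Markov property, which is a genuinely different route from solving the recurrence by its characteristic roots.

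Concretely, I would proceed as follows. First, introduce the walk $W=(W_j)_{j\ge0}$ on $\N_0$ with $0$ absorbing and, from any state $\ge1$, a down-step with probability $\alpha:=s/(u+s)$, an up-step with probability $\beta:=u\nu_1/(u+s)$, and killing with probability $\gamma:=u\nu_0/(u+s)$, so that $\alpha+\beta+\gamma=1$. Writing $h_n:=P_n(W\text{ reaches }0\text{ before being killed})$, a one-step decomposition gives $h_n=\alpha h_{n-1}+\beta h_{n+1}$ for $n\ge1$, with $h_0=1$ and $h_n\to0$ (since reaching $0$ from $n$ requires surviving at least $n$ steps, so $h_n\le(1-\gamma)^n$ when $\gamma>0$, while for $\gamma=0$ positive recurrence forces $u>s$ and hence upward transience). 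Thus $h_n$ solves the same boundary value problem as $a_n$; granting uniqueness of its bounded solution (the delicate point, discussed below), $a_n=h_n$. Second, since steps are nearest-neighbour, any path from $n+k$ to $0$ must pass through level $n$; applying the strong Markov property at the first hitting time of $n$ and using spatial homogeneity yields
\[
h_{n+k}=P_{n+k}(\text{hit }n\text{ before killed})\,h_n=h_k\,h_n,
\]
that is $a_{n+k}=a_n a_k$. This is exactly $P(L_\infty>n+k\mid L_\infty>n)=a_{n+k}/a_n=a_k=P(L_\infty>k)$. Third, setting $g(m):=a_m$, the relation $g(n+k)=g(n)g(k)$ with $g(0)=1$ forces $g(m)=p^m$ with $p:=a_1\in(0,1)$, i.e. $L_\infty\sim\Geom(1-p)$. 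Finally, evaluating the recursion at $n=1$ with $a_2=p^2$ gives $u\nu_1p^2-(u+s)p+s=0$; for $\nu_1>0$ its root in $(0,1)$ is the expression in Remark~\ref{rem:p}, and it equals $\tfrac{s}{u\nu_1}\bar y$ because $\bar y$ solves \eqref{equieq}, which is the interchange $s\leftrightarrow u\nu_1$ of Remark~\ref{rem:interchangesnu1}; for $\nu_1=0$ the recursion degenerates to $a_1=s/(u+s)$, so $p=s/(u+s)$.

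The step carrying the genuine content, and the one I expect to be the main obstacle, is the identification $a_n=h_n$: one must argue that the decay condition $\lim_n a_n=0$ singles out the subdominant (bounded) solution of the second-order recursion, so that the analytically defined $(a_n)$ and the probabilistically defined $(h_n)$ coincide, and one must dispose separately of the degenerate case $\nu_1=0$ and of the critical parameter values where the two characteristic roots collide. Alternatively, one can avoid invoking uniqueness by iterating the last-event decomposition in the proof of Proposition~\ref{prop:fearnheadrec}, which realises $a_n$ directly as the absorption probability of $W$, with branchings giving down-steps, deleterious mutations up-steps, and beneficial mutations killings. Once $a_n=h_n$ is in place, the strong Markov factorisation and the functional equation are routine.
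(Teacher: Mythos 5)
Your proposal is correct, and it reaches the key factorisation $a_{n+k}=a_n a_k$ by a genuinely different route. The paper never leaves the recursion \eqref{eq:fearnheadrecursion}: it forms the ratio sequence $b_k^{(n)}:=a_{n+k}/a_n$, observes that it satisfies the same recursion with the same boundary values ($b_0^{(n)}=1$ and $b_k^{(n)}\to 0$ as $k\to\infty$), and concludes $b_k^{(n)}=a_k$, which \emph{is} the memoryless property; the parameter then comes from the same quadratic you solve. You instead read \eqref{eq:fearnheadrecursion} as the harmonic equation of a killed nearest-neighbour walk $W$, identify $a_n$ with its absorption probability $h_n$, and obtain the factorisation from the strong Markov property at the first hitting time of level $n$. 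Note that both arguments rest on exactly the same delicate point: the paper's terse step ``in particular, $b_k^{(n)}=a_k$'' is precisely the uniqueness of the solution of the boundary value problem that you need to conclude $a_n=h_n$; the paper invokes it tacitly, while you name it and offer two ways to settle it (the subdominant-root argument, or realising $a_n$ directly as an absorption probability by iterating the last-event decomposition of Proposition~\ref{prop:fearnheadrec}, in effect an optional-stopping argument). What your detour buys is a probabilistic identity that the paper only establishes afterwards, and by other means: your $W$ is the jump chain of $\breve R$ (equivalently of $D-1$) from Remark~\ref{rem:DRrelation}, and $a_n=h_n=p^n$ is the content of Corollary~\ref{coro:DtabsorbsLt} together with \eqref{eq:pvalueexp}, which the paper obtains via Siegmund duality with $D$; so your proof in effect unifies Proposition~\ref{prop:lackofmemory} with that later discussion. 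Your evaluation of the parameter agrees with the paper's, including the identification $p=\frac{s}{u\nu_1}\bar y$ via \eqref{equieq} and the interchange of Remark~\ref{rem:interchangesnu1}. One small correction: positive recurrence as delimited in Proposition~\ref{prop:asymptoticLbeh}(iv) includes $s=0$, in which case $a_1=0$; so you should write $p=a_1\in[0,1)$ rather than $p\in(0,1)$ (the factorisation argument and the conclusion $L_\infty^{}\sim\Geom(1-p)$ are unaffected).
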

\begin{proof}
	Denote~$b_k^{(n)}:=P(L_{\infty}>n+k \mid L_{\infty}>n)$. Clearly,~$b_0^{(n)}=1$ and ${\lim_{k\to \infty} b_{k}^{(n)}=0}$ for all~$n\in \N$. By Proposition~\ref{prop:fearnheadrec}, $$b_k^{(n)}=\frac{a_{n+k}}{a_{n}}=\frac{1}{a_n}\Big(\frac{s}{u+s}a_{n+k-1}+\frac{u\nu_1}{u+s}a_{n+k+1}\Big)
	=\frac{s}{u+s}b_{k-1}^{(n)}+\frac{u\nu_1}{u+s}b_{k+1}^{(n)}.$$
	In particular,~$b_k^{(n)}=a_k$ for all~$k\in \N_0$. As a consequence,~$P(L_{\infty}>n)=a_1^n.$ Now that we know~$L_{\infty}$ has indeed a geometric distribution, it remains to determine the parameter. By Proposition~\ref{prop:fearnheadrec}, \begin{equation}
	a_1=\frac{s}{u+s}+ \frac{u\nu_1}{u+s}a_1^2,\label{eq:quadeqlt}
	\end{equation}
	of which the solution is given by~$a_1=s\bar{y}/u\nu_1$ if~$\nu_1>0$ and~$a_1=s/(u+s)$ if~$\nu_1=0$.\qed
\end{proof}

The recursion \eqref{eq:fearnheadrecursion} looks like a first-step decomposition for the absorption probabilities of some other process. And indeed, in the diffusion limit, \citet{baake2016} connect the tail probabilities of~$L_r$ to the absorption probabilities of another process via Siegmund duality. We establish a similar connection in the deterministic limit. Let~$(D_t)_{t\geq 0}$ be the process on~$\N^{\Delta}:=\N\cup \{\Delta\}$ with transition rates given by
\begin{equation}\label{eq:Drates}
q^{}_D(d,d-1)=(d-1)s, \qquad q^{}_D(d,d+1)=(d-1)u\nu_1, \qquad q^{}_D(d,\Delta)=(d-1)u\nu_0 
\end{equation}
for~$d \in \N$. We adopt the convention that~$n<\Delta$ for all~$n\in \N$.
\begin{remark}\label{rem:DRrelation}
	The process~$D$ exhibits an interesting connection to the line-counting process of the killed ASG. Let~$D$ and~$L$ be as previously defined with given rates~$u\nu_0,\ u\nu_1$, and~$s$. Furthermore, let~$\breve{D}$ and~$\breve{L}$ be the same processes, but with rates~$s$ and~$u\nu_1$ interchanged. Write~$\breve{R}$ for the line-counting process of a killed ASG with beneficial and deleterious mutation rate~$u\nu_0$ and~$s$, respectively, and selection rate~$u\nu_1$ (so~$u\nu_1$ and~$s$ are interchanged). Note that the rate at which mutation events occur is then~$\breve{u}=u\nu_0+s$; similarly, given that a mutation occurs, the probabilities for beneficial and deleterious mutations are~$\breve{\nu}_0=u\nu_0/(u\nu_0+s)$ and~$\breve{\nu}_1=s/(u\nu_0+s)$, respectively. Comparing \eqref{eq:Rrates} and \eqref{eq:Drates} immediately yields \begin{equation}
	\breve{R}\stackrel{d}{=}D-1 \quad \text{if}\quad \breve{R}_0 = D_0-1.\label{eq:Rplus1eqD}\end{equation} 
	In particular, the asymptotic behaviour of~$D$ follows by means of Lemma~\ref{lem:condabsorbR}: If~$\nu_0>0$,~$D$ absorbs in~$\{1,\Delta\}$ with probability~$1$. If in addition~$s=0$,~$D$ absorbs in~$\Delta$ with probability~$1$. If~$\nu_0=0$ and~$u\leq s$,~$D$ absorbs in~$1$ with probability~$1$. If~$\nu_0=0$ and~$u> s$,~$D$ absorbs in~$1$ with probability~$<1$ and, conditional on non-absorption of~$D$ in~$1$,~$D_t\to\infty$ with probability~$1$. 
\end{remark}

\begin{prop}\label{prop:siegmunddual}
	$L$ and~$D$ are Siegmund dual, i.e. for~$t\geq 0$, \begin{equation}
	P(m\leq L_t \mid L_0=n)=P(D_t\leq n \mid D_0=m), \qquad \forall n\in \N,\ m\in \N^{\Delta}. 
	\end{equation}
\end{prop}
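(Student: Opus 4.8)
The plan is to establish the Siegmund duality through the generator criterion, exactly as in the proof of Theorem~\ref{thm:dualityRY}, but now with the \emph{Siegmund duality function}
$$G(n,m):=\1_{\{m\leq n\}},\qquad n\in\N,\ m\in\N^{\Delta},$$
which is bounded and, by the convention $n<\Delta$, satisfies $G(n,\Delta)=0$. Writing $\mathcal{A}_L$ and $\mathcal{A}_D$ for the generators of $L$ and $D$ read off from \eqref{eq:pLDASGrates} and \eqref{eq:Drates}, it suffices by \citet[Thm.~3.42]{liggett2010} or \citet[Prop.~1.2]{jansen2014} to verify the pointwise identity
$$\mathcal{A}_L G(\cdot,m)(n)=\mathcal{A}_D G(n,\cdot)(m),\qquad n\in\N,\ m\in\N^{\Delta},$$
and then to integrate it up to the equality of the two semigroups, which is precisely the asserted relation.

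First I would spell out the two generators. Collecting the beneficial-mutation transitions of the pLD-ASG (a beneficial mutation on level $i<n$ sends $n\mapsto i$ at rate $u\nu_0$) gives
$$\mathcal{A}_Lg(n)=ns\,[g(n{+}1)-g(n)]+(n{-}1)u\nu_1\,[g(n{-}1)-g(n)]+u\nu_0\sum_{i=1}^{n-1}[g(i)-g(n)],$$
while $$\mathcal{A}_Dh(m)=(m{-}1)s\,[h(m{-}1)-h(m)]+(m{-}1)u\nu_1\,[h(m{+}1)-h(m)]+(m{-}1)u\nu_0\,[h(\Delta)-h(m)].$$

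Next I would compute both sides by telescoping indicator differences. Fixing $m$ and setting $f(n):=G(n,m)=\1_{\{n\geq m\}}$, one has $f(n{+}1)-f(n)=\1_{\{n=m-1\}}$, $f(n{-}1)-f(n)=-\1_{\{n=m\}}$, and $\sum_{i=1}^{n-1}[f(i)-f(n)]=-(m{-}1)\,\1_{\{n\geq m\}}$, the last because on $\{n\geq m\}$ exactly $m-1$ of the indices $i\in\{1,\dots,n-1\}$ lie below $m$. Hence $$\mathcal{A}_L G(\cdot,m)(n)=ns\,\1_{\{n=m-1\}}-(n{-}1)u\nu_1\,\1_{\{n=m\}}-(m{-}1)u\nu_0\,\1_{\{n\geq m\}}.$$ Fixing $n$ and setting $h(m):=G(n,m)=\1_{\{m\leq n\}}$ with $h(\Delta)=0$, one gets $h(m{-}1)-h(m)=\1_{\{m=n+1\}}$, $h(m{+}1)-h(m)=-\1_{\{m=n\}}$, and $h(\Delta)-h(m)=-\1_{\{m\leq n\}}$, so that $$\mathcal{A}_D G(n,\cdot)(m)=(m{-}1)s\,\1_{\{m=n+1\}}-(m{-}1)u\nu_1\,\1_{\{m=n\}}-(m{-}1)u\nu_0\,\1_{\{m\leq n\}}.$$ Comparing term by term—using $ns=(m{-}1)s$ together with $\1_{\{n=m-1\}}=\1_{\{m=n+1\}}$ on the selection term, $n-1=m-1$ on the event $\{n=m\}$ for the deleterious term, and $\1_{\{n\geq m\}}=\1_{\{m\leq n\}}$ for the beneficial term—the two expressions coincide; the case $m=\Delta$ is trivial, since then $G(\cdot,\Delta)\equiv0$ and $\Delta$ is absorbing for $D$.

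Finally I would discharge the analytic hypotheses of the duality theorem. Both $L$ and $D$ are non-explosive: $L$ is stochastically dominated by a Yule process of rate $s$, as already used in the proof of Proposition~\ref{prop:asymptoticLbeh}, and $D$ by a linear birth process of rate $u\nu_1$; together with the boundedness of $G$ this legitimises passing from the generator identity to the equality of semigroups. I expect the only genuine subtlety to be the beneficial-mutation term: on the pLD-ASG side a single beneficial mutation can prune arbitrarily many levels at once, producing the sum $\sum_{i=1}^{n-1}$, whereas on the $D$ side the matching event is the single jump to the cemetery $\Delta$. The counting identity $\#\{1\le i\le n-1: i\le m-1\}=m-1$ on $\{n\ge m\}$ is exactly what collapses the multi-level pruning into the one-dimensional Siegmund kernel, and checking this bookkeeping is the crux of the argument.
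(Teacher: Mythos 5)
Your proposal is correct and follows essentially the same route as the paper: it verifies the generator identity $\mathcal{A}_L\bar{H}(\cdot,m)(n)=\mathcal{A}_D\bar{H}(n,\cdot)(m)$ for the Siegmund duality function $\1_{\{m\leq n\}}$ and invokes the generator duality criterion of \citet[Thm.~3.42]{liggett2010}, with the same key counting identity collapsing the multi-level pruning term into $(m-1)u\nu_0\1_{\{m\leq n\}}$. Your explicit treatment of the case $m=\Delta$ and of non-explosiveness merely makes precise what the paper leaves implicit.
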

\begin{proof}
	We denote the infinitesimal generators of~$L_t$ and~$D_t$ by~$\mathcal{A}_L$ and~$\mathcal{A}_D$, respectively. They have the form \begin{equation*}\begin{split}
	\mathcal{A}_Lf(n)=&\ ns [f(n+1)-f(n)]  +\left((n-1)u\nu_1+\1_{\{n\geq 1\}}u\nu_0\right)[f(n-1)-f(n)] \\
	&+ u\nu_0\sum_{i=1}^{n-2}[f(i)-f(n)]\end{split}
	\end{equation*}
	for~$f\in C_b(\N,\R)$, and 
	\begin{equation*}\begin{split}
	\mathcal{A}_D\tilde{f}(m)=&\ (m-1)s[\tilde{f}(m-1)-\tilde{f}(m)]+ (m-1)u\nu_1[\tilde{f}(m+1)-\tilde{f}(m)] \\
	&+ (m-1)u\nu_0[\tilde{f}(\Delta)-\tilde{f}(m)]\end{split}\label{eq:generatorD}\end{equation*}
	for~$\tilde{f}\in C_b(\N^{\Delta},\R).$ In the case of a Siegmund duality, the duality function is $$\bar{H}(n,m)=\1_{(m\leq n)}.$$ We will show that~$\mathcal{A}_L\bar{H}(\cdot,m)(n)=\mathcal{A}_D\bar{H}(n,\cdot)(m)$. The result follows then once more as an application of \citet[Thm.~3.42]{liggett2010}. Indeed,
	$$\mathcal{A}_L\bar{H}(\cdot,m)(n)=ns\1_{(m=n+1)}-(n-1)u\nu_1\1_{(m=n)}-u\nu_0 \sum_{i=1}^{n-1} \1_{(m\leq n<m+i)}.$$
	Note that $$\sum_{i=1}^{n-1} \1_{(m\leq n<m+i)} =\begin{cases}
	m-1, &\text{if }n\geq m> 1,\\
	0, &\text{otherwise}.
	\end{cases} $$
	Thus, we can rewrite~$\mathcal{A}_L\bar{H}(\cdot,m)(n)$ as$$(m-1)s\1_{(m-1=n)}-(m-1)u\nu_1\1_{(m=n)}-(m-1)u\nu_0\1_{(m\leq n)},$$
	which equals~$\mathcal{A}_D\bar{H}(n,\cdot)(m)$, as required.\qed
\end{proof}
\begin{remark}
	The analogous result in the diffusion limit is proven in~\citet{baake2016} via Clifford-Sudbury flights~\citep{clifford1985}. Their proof leads to a pathwise duality but relies on a particle representation of the forward process. We expect a similar argument to apply also in our setting, but as noted in Remark~\ref{rem:strongdualityforwardpicture}, this would require to first establish a particle representation for the forward process in the deterministic limit.
\end{remark}
\begin{coro}\label{coro:DtabsorbsLt} \begin{equation}\label{eq:auxiliaryabsorbtion}   P(\lim_{t\to\infty}D_t=1 \mid D_0=n+1)=a_n,\qquad \forall n\in \N.
	\end{equation}
\end{coro}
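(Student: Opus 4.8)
The plan is to read the statement essentially off the Siegmund duality of Proposition~\ref{prop:siegmunddual}, exploiting that~$1$ is an absorbing state for~$D$. First I would specialise the duality relation to the initial value~$L_0=1$ (playing the role of the ``$n$'' there) and to~$m=n+1$, which gives
$$P(n+1\leq L_t\mid L_0=1)=P(D_t\leq 1\mid D_0=n+1)\qquad\text{for all }t\geq0.$$
Since~$D$ takes values in~$\N^{\Delta}$ with~$1=\min\N$ and~$\Delta$ the maximal element, the event~$\{D_t\leq 1\}$ coincides with~$\{D_t=1\}$, so that
$$P(L_t>n\mid L_0=1)=P(D_t=1\mid D_0=n+1)\qquad\text{for all }t\geq0.$$
Letting~$t\to\infty$, the left-hand side converges to~$a_n$ by the very definition of the tail coefficients (recall that we are in the positive recurrent regime, where this limit exists).

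It then remains to identify the limit of the right-hand side with the absorption probability~$P(\lim_{t\to\infty}D_t=1\mid D_0=n+1)$. For this I would use that~$1$ is absorbing for~$D$: every transition rate out of state~$d$ in~\eqref{eq:Drates} carries the factor~$(d-1)$, which vanishes at~$d=1$. Consequently the events~$\{D_t=1\}$ are nondecreasing in~$t$, and their increasing union equals~$\{\lim_{t\to\infty}D_t=1\}$, because once~$D$ reaches~$1$ it remains there. Continuity of probability along increasing events then yields
$$\lim_{t\to\infty}P(D_t=1\mid D_0=n+1)=P\Big(\lim_{t\to\infty}D_t=1\,\Big|\,D_0=n+1\Big),$$
and combining the three displays gives the assertion.

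The argument is largely bookkeeping once the duality is in hand. The only point requiring care is the passage to the limit on the~$D$-side, which relies \emph{solely} on~$1$ being absorbing (so that the indicator events increase to their union) rather than on any finer convergence of~$D_t$. The asymptotic trichotomy recorded in Remark~\ref{rem:DRrelation}---absorption in~$\{1,\Delta\}$ when~$\nu_0>0$, or escape to~$\infty$ when~$\nu_0=0$ and~$u>s$---is reassuring but not strictly needed, since the monotone-union identity for~$\{D_t=1\}$ holds regardless of how the non-absorbing mass behaves. Thus the main (and minor) obstacle is to be precise about the two elementary facts that~$\{D_t\leq1\}=\{D_t=1\}$ in~$\N^{\Delta}$ and that absorption at~$1$ legitimises the interchange of limit and probability.
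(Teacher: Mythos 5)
Your argument is exactly the paper's intended route: the paper's own proof is the one-line remark that the corollary is a direct consequence of Proposition~\ref{prop:siegmunddual}, and your three displays are the natural filling-in of that remark --- specialise the duality to $L_0=1$ and $m=n+1$, note $\{D_t\leq 1\}=\{D_t=1\}$ in $\N^{\Delta}$, and pass to the limit using that $1$ is absorbing for $D$.

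There is one blemish worth fixing. You invoke positive recurrence of $L$ to guarantee that $\lim_{t\to\infty}P(L_t>n\mid L_0=1)$ exists and equals $a_n$. That restriction is unnecessary, and it actually matters: the corollary is stated for all parameter regimes and is applied in Corollary~\ref{coro:nullrecurrentinfprob} precisely in the null recurrent case $u=s$, $\nu_0=0$, where your hypothesis fails. The repair is already contained in your own argument. For each fixed $t$ the duality gives $P(L_t>n\mid L_0=1)=P(D_t=1\mid D_0=n+1)$, and the right-hand side is nondecreasing in $t$ because $1$ is absorbing for $D$; hence it converges. The identity then forces the left-hand side to converge as well, so $a_n$ exists in \emph{every} regime and equals the absorption probability --- the duality proves existence of the tail limit rather than presupposing it. Delete the parenthetical appeal to positive recurrence and state this observation explicitly, and your proof covers every case in which the paper uses the corollary.
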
\begin{proof} The proof is a direct consequence of Proposition~\ref{prop:siegmunddual}.\qed
\end{proof}
This result gives an alternative way to recover \eqref{eq:fearnheadrecursion} via a first-step decomposition of the absorption probabilities of~$D$. By Remark~\ref{rem:DRrelation}, the absorption probability of~$D$ in~$1$, given~$D_0=n+1$, equals the absorption probability of~$\breve{R}$ in~$0$, given~$\breve{R}_0=n$. In particular,~$a_n=\breve{\bar{y}}^n$, where \begin{equation}\label{eq:pvalueexp}
\breve{\bar{y}}:=\begin{cases}\frac{1}{2}\left(1+\frac{u\nu_0+s}{u\nu_1}-\sqrt{(1-\frac{u\nu_0+s}{u\nu_1})^2 + 4\frac{\nu_0}{\nu_1}}\, \right), & \nu_1>0,\\
\frac{s}{u+s}, &\nu_1=0.
\end{cases}\end{equation} This is consistent with Remark~\ref{rem:interchangesnu1}: The recursion for the absorption probability of~$\breve{R}$ in~$0$ is obtained by interchanging the roles of~$u\nu_1$ and~$s$ in Fearnhead's recursion. Hence,~$\breve{\bar{y}}$ is as in \eqref{eq:stablepoint}, but with~$u\nu_1$ and~$s$ interchanged; note that this implies replacement of~$u=u\nu_0+u\nu_1$ by~$u\nu_0+s$. On the other hand, as a consequence of Corollary~\ref{coro:DtabsorbsLt},~$\breve{\bar{y}}=p$ with~$p$ from Proposition~\ref{prop:lackofmemory}.
In a similar way, we can derive that $$\bar{y}=\lim_{r\to\infty} P_1(\breve{L}_{r}>1).$$
We can now deal with the asymptotic behaviour when~$L$ is null recurrent (recall from Proposition~\ref{prop:asymptoticLbeh} that this is the case for~$\nu_0=0$ and~$u=s$).
\begin{coro}\label{coro:nullrecurrentinfprob}
	If~$u=s$ and~$\nu_0=0$, $$\lim_{r\to\infty}P_1(L_r>n)=1.$$
\end{coro}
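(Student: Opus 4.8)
The plan is to read off the tail probability $P_1(L_r>n)$ from the Siegmund duality of Proposition~\ref{prop:siegmunddual} and then invoke the absorption behaviour of the auxiliary process~$D$ recorded in Remark~\ref{rem:DRrelation}. Concretely, since we start~$L$ from a single line, I would apply the duality identity $P(m\le L_t\mid L_0=n)=P(D_t\le n\mid D_0=m)$ with the initial level~$n=1$ and the comparison level~$m=n+1$, turning a tail event for~$L$ into a left-absorption event for~$D$. The key observation is that, because~$D$ takes values in~$\N^\Delta$ with the convention~$k<\Delta$ for all~$k\in\N$, the event $\{D_t\le 1\}$ coincides with $\{D_t=1\}$.

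Carried out, the first step yields, for every fixed~$n\in\N$ and every~$r\geq 0$,
$$P_1(L_r>n)=P(n+1\le L_r\mid L_0=1)=P(D_r\le 1\mid D_0=n+1)=P(D_r=1\mid D_0=n+1).$$
The second step is to let~$r\to\infty$. Here I would use that in the regime~$\nu_0=0$, $u\le s$ (which contains the null-recurrent case~$u=s$), Remark~\ref{rem:DRrelation} tells us that~$D$ absorbs in~$1$ with probability~$1$; this follows from~\eqref{eq:Rplus1eqD} together with Lemma~\ref{lem:condabsorbR}, since after interchanging~$s$ and~$u\nu_1=u$ the shifted process~$\breve R=D-1$ becomes a critical binary birth--death process with equal birth and death rates~$s=u$, hence dies out almost surely. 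Because the state~$1$ is absorbing for~$D$ (all exit rates in~\eqref{eq:Drates} vanish at~$d=1$), the events $\{D_r=1\}$ increase to the event of eventual absorption in~$1$, so by continuity of measure $\lim_{r\to\infty}P(D_r=1\mid D_0=n+1)=1$. Combining the two steps gives the claim for every~$n$.

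I do not expect a genuine obstacle, as both ingredients are already in hand; the only care required is the bookkeeping in the duality (matching the comparison index~$m=n+1$ to the threshold, and the $\{D_r\le1\}=\{D_r=1\}$ identification). It is worth noting that one cannot simply quote Corollary~\ref{coro:DtabsorbsLt}, since that statement presupposes positive recurrence so that~$a_n=\lim_r P_1(L_r>n)$ exists and equals an absorption probability; in the null-recurrent case the limit is precisely what is being computed, so the argument must go directly through the $t$-dependent duality of Proposition~\ref{prop:siegmunddual} and the asymptotics of~$D$ rather than through the coefficients~$a_n$. An alternative route is to analyse~$L$ directly: for~$\nu_0=0$, $u=s$ it is the birth--death chain with birth rate~$ns$ and death rate~$(n-1)s$, and one could establish the escape of mass to infinity by hand; but the duality argument is shorter and reuses machinery already developed.
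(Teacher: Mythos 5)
Your proof is correct and is essentially the paper's own argument: the paper proves this corollary by citing Corollary~\ref{coro:DtabsorbsLt} together with Remark~\ref{rem:DRrelation}, and what you do is simply inline the Siegmund-duality proof of Corollary~\ref{coro:DtabsorbsLt} (apply Proposition~\ref{prop:siegmunddual} with initial state~$1$ and threshold~$m=n+1$, identify $\{D_r\le 1\}=\{D_r=1\}$) before invoking the same absorption fact for~$D$. One correction to your closing caveat: Corollary~\ref{coro:DtabsorbsLt} does not presuppose positive recurrence --- the paper states it for all~$n\in\N$ and indeed quotes it precisely in this null-recurrent case; its proof via Proposition~\ref{prop:siegmunddual} is exactly your monotonicity argument (since~$1$ is absorbing for~$D$, the events $\{D_t=1\}$ increase to the event of eventual absorption), which shows the limit defining~$a_n$ exists in every regime, so the paper's direct citation is legitimate and your unpacking just makes this explicit.
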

\begin{proof}
	The proof is an immediate consequence of Corollary~\ref{coro:DtabsorbsLt} together with Remark~\ref{rem:DRrelation}.\qed
\end{proof}

\section{The ancestral type distribution}\label{s6}
In the Moran model (with finite~$N$) and in the diffusion limit, all individuals at present originate from a single individual in the distant past, see \citet[Sect.~3]{Cordero2017590} and \citet[Thm.~3.2]{krone1997ancestral}, 
respectively. This individual is called the common ancestor, and the distribution of its type is the common ancestor type distribution. In the diffusion limit, \citet{fearnhead2002common} derived this distribution in terms of coefficients characterised via a recursion (see also \citet{Taylor2007}). In the deterministic limit, there are no coalescence and collision events \citep{Cordero2017590,krone1997ancestral}, so the notion of a common ancestor does not make sense. Instead \citet{Cordero2017590} introduces the representative ancestral type~(RA type). This is the type of the ancestor of a generic individual in the population, denoted earlier~\citep{georgii2003} as the ancestral type of a typical individual. The general concept was developed by \citet{JAGERS1989183,Jagers1992} in the context of branching processes. In our case, the RA type at backward time~$r$ is denoted by~$J_r$ and takes values in~$\{0,1\}$. For a representative ancestor that lives in a population with type distribution~$(1-y_0,y_0)$, we define~${g^{}_r(y_0):=P_{y_0}(J_r=1)}$ as the conditional probability of an unfit RA type at backward time~$r$. It follows from the discussion after Definition~\ref{def:pLDASG}~(alternatively, see \citet[Prop.~5.5, Cor.~5.6]{Cordero2017590}) that \begin{equation}
g^{}_r(y_0)=E_1[y_0^{L_r} ]=1-(1-y_0)\sum_{n\geq 0}P_1(L_{r}>n) y_0^n\label{eq:ratypetails}.\end{equation}
This is consistent with the graphical picture: the RA type at backward time~$r$ is~$1$ if and only if all~$L_r$ lines are of type~$1$. The corresponding probability is given by~$E_1[y_0^{L_r}]$. Alternatively, we can partition the event of a beneficial representative ancestor according to the first level occupied by a type-$0$ individual. Namely, $$P_1(L_r>n) (1-y_0)y_0^n $$is the probability that at least~$n+1$ lines are present, the~$(n+1)^{st}$ line is of type~$0$, and the first~$n$ lines are of type~$1$. Summing this probability over~$n$ gives the probability of an RA type~$0$. The complementary probability leads to the right-hand side of ~\eqref{eq:ratypetails}.

Now let~$g^{}_{\infty}(y_0):=\lim_{r\to\infty} g_r(y_0)$ be the conditional probability for an unfit RA type of an individual sampled at a very late time. If~${\sum_{n\geq 0}a_ny_0^n<\infty}$, equation \eqref{eq:ratypetails}~yields \begin{equation}
g^{}_{\infty}(y_0)=1-(1-y_0)\sum_{n\geq 0}a_ny_0^n.\label{eq:ratypetailslimit}\end{equation}
We can now exploit what we know about the~$a_n$ to obtain explicit expressions for~$g_{\infty}^{}$. This is captured in the following theorem.
\begin{thm}\label{thm:hy}
	\begin{enumerate}[label=(\roman*),leftmargin=25pt]
		\item If~$s=0$,~$g^{}_{\infty}(y_0)=y_0$ for all~$y_0\in [0,1].$
		\item If~$u\leq s$ and~$\nu_0=0$,~$g^{}_{\infty}(y_0)=\begin{cases} 0, &\text{if} \ y_0\in[0,1),\\
		1, &\text{if}\ y_0=1.\end{cases}$
		\item If~$s>0$ and either~$u>s$ or~$\nu_0>0$,~$g^{}_{\infty}(y_0)=\frac{1-p}{1-py_0}y_0.$
	\end{enumerate}
\end{thm}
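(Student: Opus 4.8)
The plan is to read off everything from the tail representation \eqref{eq:ratypetailslimit}, so that the whole theorem reduces to identifying the limiting tail probabilities $a_n=\lim_{r\to\infty}P_1(L_r>n)$ in each of the three regimes, using the asymptotics of $L$ from Proposition~\ref{prop:asymptoticLbeh}. The one analytic point to dispatch first is the passage to the limit: \eqref{eq:ratypetails} is an \emph{exact} identity for every finite $r$, and for $y_0\in[0,1)$ the summands $P_1(L_r>n)\,y_0^n$ are dominated by the convergent geometric series $\sum_{n\geq0}y_0^n$. Hence, as soon as each $a_n$ exists, dominated convergence lets me interchange $\lim_{r\to\infty}$ with the sum and validates \eqref{eq:ratypetailslimit}. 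After that, each case is a short computation.

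Cases~(i) and~(iii) are the clean ones, in which $a_n$ is read off a limiting distribution. For~(i), Proposition~\ref{prop:asymptoticLbeh}(i) gives absorption of $L$ in $1$, so $a_0=1$ (since $L_r\geq1$ always) and $a_n=0$ for $n\geq1$; the sum collapses to $1$ and \eqref{eq:ratypetailslimit} returns $g_\infty(y_0)=1-(1-y_0)=y_0$. For~(iii), positive recurrence together with Proposition~\ref{prop:lackofmemory} gives $L_\infty\sim\Geom(1-p)$, hence $a_n=P(L_\infty>n)=p^n$. Since $p<1$ in this regime, $\sum_{n\geq0}(py_0)^n=(1-py_0)^{-1}$ converges for all $y_0\in[0,1]$, and \eqref{eq:ratypetailslimit} yields $g_\infty(y_0)=1-(1-y_0)/(1-py_0)=(1-p)y_0/(1-py_0)$ after a one-line simplification.

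Case~(ii) is where I would combine the two sub-regimes $u<s$ and $u=s$ (both with $\nu_0=0$): in each, $\lim_{r\to\infty}P_1(L_r>n)=1$ for every $n$, which for $u<s$ is transience from Proposition~\ref{prop:asymptoticLbeh}(ii) and for $u=s$ is exactly Corollary~\ref{coro:nullrecurrentinfprob}. Thus $a_n=1$ for all $n$, and for $y_0\in[0,1)$ the sum equals $(1-y_0)^{-1}$, so $g_\infty(y_0)=1-(1-y_0)(1-y_0)^{-1}=0$. The endpoint $y_0=1$ must be handled separately, because the series diverges there; but directly $g_r(1)=E_1[1^{L_r}]=1$ for every $r$, whence $g_\infty(1)=1$.

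The main obstacle is therefore not any single algebraic step but the careful bookkeeping around the limit interchange and the boundary. Concretely, the delicate point is establishing that $a_n$ genuinely \emph{exists} in the null-recurrent and transient settings, where $L$ has no stationary law and so $a_n$ cannot simply be identified with a limiting distribution's tail; this is precisely what the transience statement in Proposition~\ref{prop:asymptoticLbeh}(ii) and Corollary~\ref{coro:nullrecurrentinfprob} supply. The companion subtlety is the divergence of $\sum_n a_n y_0^n$ at $y_0=1$ in case~(ii), which forces the separate, elementary evaluation $g_r(1)=1$ rather than an appeal to \eqref{eq:ratypetailslimit}.
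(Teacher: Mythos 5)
Your proposal is correct and follows essentially the same route as the paper: identify the tail limits $a_n$ from Proposition~\ref{prop:asymptoticLbeh} (absorption in case~(i), transience and Corollary~\ref{coro:nullrecurrentinfprob} in case~(ii), the geometric law of Proposition~\ref{prop:lackofmemory} in case~(iii)), plug them into \eqref{eq:ratypetailslimit}, and treat the boundary $y_0=1$ in case~(ii) by the elementary evaluation $g_r(1)=1$. Your explicit dominated-convergence justification of the interchange of limit and sum is a slightly more careful version of the paper's standing assumption $\sum_n a_n y_0^n<\infty$ preceding the theorem, but the argument is the same in substance.
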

\begin{proof} In case~$(i)$,~$L$ absorbs in~$1$ and hence~$a_0=1$ and~$a_n=0$ for all~$n\geq 1$. In particular,~$\sum_{n\geq 0}a_ny_0^n<\infty$ for all~$y_0\in [0,1]$, so that together with \eqref{eq:ratypetailslimit} the result follows. For case~$(ii)$, we first treat the subcase~$u<s$ and~$\nu_0=0$. There,~$L$ is transient and hence~$L_r\to\infty$ almost surely. Hence,~$a_n=1$ for all~$n\geq 0$. For~$y_0\in [0,1)$, again~$\sum_{n\geq 0}a_ny_0^n<\infty$, and the result follows by~\eqref{eq:ratypetailslimit}. For~$y_0=1$, we use that~$L$ is bounded for all~$r>0$. In particular, ${\sum_{n\geq 0}P(L_r>n\mid L_0=1)<\infty}$. But then,~$g^{}_{r}(1)=1$ for~$r>0$. Taking the limit~$r\to \infty$ yields the result. The other subcase of~$(ii)$ is~$u=s$ and~$\nu_0=0$. There, Corollary~\ref{coro:nullrecurrentinfprob} leads to~$a_n\equiv 1$~$(n\geq 0)$. Case~$(iii)$ follows by summing the geometric series obtained from \eqref{eq:ratypetailslimit} via Proposition~\ref{prop:lackofmemory}.\qed
\end{proof}

Theorem~\ref{thm:hy} is consistent with the graphical picture. Case~$(i)$ corresponds to the neutral situation, in which each individual has exactly one potential ancestor at all times; the representative ancestor is then a single draw from the initial distribution. In particular, there is no bias towards one of the types. In case~$(iii)$,~$L_{\infty}>1$ with positive probability, so there is a bias towards the beneficial type. The reason is that a single beneficial potential ancestor suffices for the RA type to be of type~$0$, which manifests itself in the factor~${(1-p)/(1-py_0)<1}$ for~$y_0<1$. In case~$(ii)$, depending on whether~$u=s$ or~$u<s$,~$L$ is null recurrent or transient. In both cases, the number of potential ancestors in the limit~$r\to \infty$ is infinite and the bias towards type~$0$ is taken to an extreme: Any positive proportion of beneficial types suffices to ensure that the ancestor has type~$0$. If there are no beneficial types in the population, the RA is of type~$1$ with probability~$1$.

Let us now study the dependence on~$\nu_0$ of the probability for an unfit RA type. To stress the dependence, we write~$g_{\infty}^{}(y_0,\nu_0)$.\begin{coro}\label{coro:monotonicitynu0}
	Let~$y_0\in (0,1)$. If~$s>0$ and~$\nu_0,\bar{\nu}_0\in [0,1]$ with~$\nu_0<\bar{\nu}_0$, $$g_{\infty}^{}(y_0,\nu_0)<g_{\infty}^{}(y_0,\bar{\nu}_0).$$
\end{coro}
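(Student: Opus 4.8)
The plan is to prove monotonicity by reducing everything to the explicit formula from Theorem~\ref{thm:hy}~$(iii)$ and then showing that the parameter~$p$ is strictly decreasing in~$\nu_0$. First I would observe that, under the hypothesis~$s>0$ with~$\nu_0<\bar\nu_0$, we are automatically in case~$(iii)$ of Theorem~\ref{thm:hy} whenever~$\nu_0>0$ (and in the boundary subcase~$\nu_0=0$ we fall into case~$(i)$ or~$(ii)$, which must be handled by a limiting or direct comparison). Assuming for the moment that both~$\nu_0$ and~$\bar\nu_0$ lie in~$(0,1)$, so that both give case~$(iii)$, the claim~$g_\infty(y_0,\nu_0)<g_\infty(y_0,\bar\nu_0)$ becomes, via the formula~$g_\infty(y_0)=\tfrac{1-p}{1-py_0}\,y_0$, a statement purely about the dependence of~$p=p(u,s,\nu_1)$ on~$\nu_0=1-\nu_1$.

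The key step is therefore to analyse the map~$p\mapsto \tfrac{(1-p)y_0}{1-py_0}$ for fixed~$y_0\in(0,1)$ and to determine the monotonicity of~$p$ in~$\nu_0$. For the first part, differentiating~$\phi(p):=\tfrac{(1-p)y_0}{1-py_0}$ with respect to~$p$ gives a numerator of the form~$y_0(y_0-1)$ (up to the positive denominator~$(1-py_0)^2$), which is strictly negative for~$y_0\in(0,1)$; hence~$\phi$ is strictly decreasing in~$p$. Consequently it suffices to show that~$p$ is \emph{strictly decreasing} in~$\nu_0$, equivalently strictly increasing in~$\nu_1$. I would extract this from the explicit expression for~$p(u,s,\nu_1)$ in Remark~\ref{rem:p}, or, more cleanly, from the fixed-point characterisation: recall from Proposition~\ref{prop:lackofmemory} and \eqref{eq:quadeqlt} that~$a_1=p$ solves~$a_1=\tfrac{s}{u+s}+\tfrac{u\nu_1}{u+s}a_1^2$. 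Treating~$p$ as an implicit function of~$\nu_1$ and differentiating~$p=\tfrac{s}{u+s}+\tfrac{u\nu_1}{u+s}p^2$ yields
\begin{equation*}
\frac{dp}{d\nu_1}\Big(1-\frac{2u\nu_1 p}{u+s}\Big)=\frac{u p^2}{u+s},
\end{equation*}
so~$\tfrac{dp}{d\nu_1}>0$ provided the bracket~$1-\tfrac{2u\nu_1 p}{u+s}$ is positive; since~$p<1$ and~$u\nu_1<u+s$, one checks~$\tfrac{2u\nu_1 p}{u+s}<1$, giving strict positivity. Chaining~$\nu_0\mapsto\nu_1\mapsto p\mapsto\phi(p)$ then yields the strict inequality in case~$(iii)$.

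The remaining work, and the main obstacle, is the treatment of the boundary value~$\nu_0=0$, where Theorem~\ref{thm:hy}~$(iii)$ does not apply and~$g_\infty$ is given instead by case~$(i)$ (if~$s=0$, excluded here) or case~$(ii)$. For~$s>0$ and~$\nu_0=0$ we have~$g_\infty(y_0,0)=0$ for~$y_0\in(0,1)$ by Theorem~\ref{thm:hy}~$(ii)$, whereas for any~$\bar\nu_0\in(0,1]$ we have~$g_\infty(y_0,\bar\nu_0)>0$ since then~$p<1$ and~$\tfrac{(1-p)y_0}{1-py_0}>0$; this settles the comparison whenever~$\nu_0=0<\bar\nu_0$. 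The delicate point is continuity at the boundary: one should confirm that~$\lim_{\nu_0\to 0^+}g_\infty(y_0,\nu_0)=0$ for~$y_0\in(0,1)$, matching case~$(ii)$, which follows because~$p\to 1$ as~$\nu_1\to 1$ (equivalently~$\nu_0\to 0$) in the regime~$u\le s$, using Remark~\ref{rem:p}'s continuity in~$\nu_1$ and the fact that~$\tfrac{(1-p)y_0}{1-py_0}\to 0$ as~$p\to 1$. I would also note the endpoint~$\bar\nu_0=1$ (so~$\nu_1=0$), where~$p=s/(u+s)$ is well defined and the case~$(iii)$ formula still holds, so no separate argument is needed there. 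Assembling these pieces — strict interior monotonicity from the implicit-function computation, plus the boundary comparison at~$\nu_0=0$ — gives the strict inequality for all admissible pairs~$\nu_0<\bar\nu_0$.
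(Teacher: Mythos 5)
Your route is genuinely different from the paper's: there, the result is proved by coupling two pLD-ASGs (in the spirit of \citet[Sect.~6]{Lenz201527}) so that the line-counting processes satisfy $\bar{L}_r\leq L_r$ almost surely for all $r$, which gives $g_r(y_0,\nu_0)\leq g_r(y_0,\bar{\nu}_0)$ for every finite $r$ and in every parameter regime simultaneously; strictness is then obtained by showing $P_1(\bar{L}_\infty<L_\infty)>0$. Your formula-based plan can be made to work, but as written it has two genuine errors, both located in exactly the case analysis that the coupling avoids. The first is at the boundary: you assert that for $s>0$ and $\nu_0=0$ one has $g_\infty(y_0,0)=0$ by Theorem~\ref{thm:hy}~$(ii)$, but that case requires $u\leq s$. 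If $u>s$ and $\nu_0=0$, Theorem~\ref{thm:hy}~$(iii)$ applies (with $\nu_1=1$, where $p=s/u$), and then $g_\infty(y_0,0)=\frac{(1-s/u)y_0}{1-(s/u)y_0}>0$, so your claimed identity is false. The repair is easy — for $u>s$ the whole range $\nu_0\in[0,1]$ lies in case~$(iii)$, so your interior monotonicity argument covers it, and case~$(ii)$ is needed only when $u\leq s$ — but your case split, which routes all of $\nu_0=0$ through cases $(i)$/$(ii)$, is incorrect as stated. Incidentally, the "continuity at the boundary" discussion is superfluous: the corollary compares two fixed parameter values, so no limit $\nu_0\to 0^+$ is needed.

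The second gap is the justification of the key inequality in your implicit differentiation. You need $1-\frac{2u\nu_1 p}{u+s}>0$ and claim this follows "since $p<1$ and $u\nu_1<u+s$"; those two facts only give $\frac{2u\nu_1 p}{u+s}<2$. The inequality is true, but for a structural reason: $p$ is the \emph{smaller} root of $u\nu_1x^2-(u+s)x+s=0$, so $2u\nu_1 p=(u+s)-\sqrt{(u+s)^2-4su\nu_1}$ and hence
\begin{equation*}
1-\frac{2u\nu_1 p}{u+s}=\frac{\sqrt{(u+s)^2-4su\nu_1}}{u+s},
\end{equation*}
which is strictly positive because $(u+s)^2-4su\nu_1\geq (u-s)^2$, the discriminant vanishing only at $u=s$, $\nu_1=1$, i.e.\ at the null-recurrent point $u=s$, $\nu_0=0$ that is excluded from case~$(iii)$. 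Note that this bracket tends to $0$ as $(u,\nu_0)\to(s,0)$, so no bound as crude as the one you invoke can possibly suffice. A cleaner alternative that avoids implicit differentiation altogether: setting $\theta=4su\nu_1/(u+s)^2\in[0,1)$, one finds $p=\frac{2s}{(u+s)(1+\sqrt{1-\theta})}$, which is manifestly strictly increasing in $\nu_1$, hence strictly decreasing in $\nu_0$, also covering the endpoint $\nu_1=0$. With these two repairs your argument is complete; what it buys over the paper's coupling is explicitness, at the price of the case distinctions in which both errors occurred.
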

\begin{proof}
	Fix~$s,u>0$,~$y_0\in (0,1)$, and~$\nu_0,\bar{\nu}_0\in[0,1]$ with~$\nu_0<\bar{\nu}_0$. Furthermore, we set~${\nu_1=1-\nu_0}$ and~$\bar{\nu}_1=1-\bar{\nu}_0$. Clearly,~$\nu_1>\bar{\nu}_1$. We use a coupling argument similar to \citet[Sect.~6]{Lenz201527}. Write~${\Gb=(\Gb_{r})_{r\geq 0}}$ for the pLD-ASG with selection rate~$s$, mutation rate~$u$, beneficial mutation probability~$\nu_0$, and deleterious mutation probability~$\nu_1$. Let~$L=(L_{r})_{r\geq 0}$ be the line-counting process of~$\Gb$. We write~$\bar{\Gb}=(\bar{\Gb}_{r})_{r\geq 0}$ for another pLD-ASG, with line-counting process
	$\bar{L}=(\bar{L}_{r})_{r\geq 0}$, which we couple with~$\Gb$ in such a way that~$\bar{L}_r\leq L_r$ almost surely, for all~$r\geq 0$. We start with~$L_0=\bar{L}_0=1$. Assume that we have constructed~$\{\Gb_q\}_{q<r}$ and~$\{\bar{\Gb}_q\}_{q<r}$, and that~$\bar{L}_{r-}\leq L_{r-}$ almost surely. If a line of~$\Gb_{r-}$ at level~$i\leq \bar{L}_{r-}$ branches at time~$r$, then also the line of~$\bar{\Gb}_{r-}$ at level~$i$ branches. If on a line of~$\Gb_{r-}$ at level~$i\leq \bar{L}_{r-}$ there is a beneficial mutation at time~$r$, then the line of~$\bar{\Gb}_{r-}$ at level~$i$ has a beneficial mutation. 
	If on a line of~$\Gb_{r-}$ at level~$i\leq L_{r-}$ there is a deleterious mutation at time~$r$, we toss a coin. With probability~$\bar{\nu}_1/\nu_1$ we have a deleterious mutation on the line of~$\bar{\Gb}_{r-}$ at level~$i$; but with probability~$1-\bar{\nu}_1/\nu_1$ we have a beneficial mutation on the line of~$\bar{\Gb}_{r-}$ at level~$i$. In all the cases~$\bar{L}_r\leq L_r$ almost surely. Constructing~$\bar{\Gb}$ in this inductive manner leads to
	a pLD-ASG with selection rate~$s$, mutation rate~$u$, beneficial mutation probability~$\bar{\nu}_0$, and deleterious mutation probability~$\bar{\nu}_1$ with the desired property. In particular, using \eqref{eq:ratypetails}, we get
	$$g_r(y_0,\nu_0)=E_1\big [y_0^{L_{r}}\big]\leq E_1\big[y_0^{\bar{L}_{r}}\big]=g_r(y_0,\bar{\nu}_0).$$
	Letting~$r\to\infty$ leads to~$g_{\infty}^{}(y_0,\nu_0)\leq g_{\infty}^{}(y_0,\bar{\nu}_0).$ In order to show that the inequality holds in the strict sense, it is enough to show that~${P_1\big(\bar{L}_\infty<L_\infty\big)>0}$.
	To see this, we let~$T(r)$ be the time of the last deleterious mutation on the lowest line that has occurred before time~$r$ in~$L$~(with the convention~${T(r):=-\infty}$ if there is no such mutation). On the set~${\{T(r)>0\}}$, let $L_{T(r)-}:= \lim_{q\nearrow T(r)}L_{q}$ be the state `just before' the mutation. 
	Note that 
	$$P_1\big(\bar{L}_r<L_r\big)\geq P_1\big(\bar{L}_r<L_r,T(r)>0\big)\geq \left(1-\frac{\bar{\nu}_1}{\nu_1}\right)P_1(L_{T(r)-}>1,T(r)>0)$$ for~$r>0$. Clearly~$T(r)$ tends to~$\infty$ as~$r\to\infty$. Hence, we also have that ${\lim_{r\to\infty}P_1(L_{T(r)-}>1,T(r)>0)=p}$, where~$p$ is given in Proposition~\ref{prop:asymptoticLbeh}. In particular,~$P_1\big(\bar{L}_{\infty}<L_{\infty}\big)\geq p\big(1-\bar{\nu}_1/\nu_1\big)>0$, and the result follows.\qed
\end{proof}

Consider now~$(1-g_{\infty}^{}(\bar{y}),g_{\infty}^{}(\bar{y}))$, namely, the distribution of the RA type that lives in the stable equilibrium population~$\bar{y}$. We call it the RA type distribution at equilibrium and characterise it in what follows. First note that both~$g_{\infty}^{}$ and~$\bar{y}$ are functions of~$\nu_0$. In order to stress this~(double) dependence, we write~$g_{\infty}^{}(\bar{y}(\nu_0),\nu_0)$. We also write~$L(\nu_0)$ instead of~$L$. Recall from Section~\ref{sec:moran} that, for~$\nu_0=0$,~$\bar{y}(0)=\min\{u/s,1\}$ and so, by Theorem~\ref{thm:hy}, 
$$g^{}_{\infty}(\bar{y}(0),0)=\begin{cases}0,&\text{if }u<s,\\
1,&\text{if }u\geq s.\end{cases} $$\begin{figure}[t!]
	\begin{center}
		\scalebox{.35}{
			\input{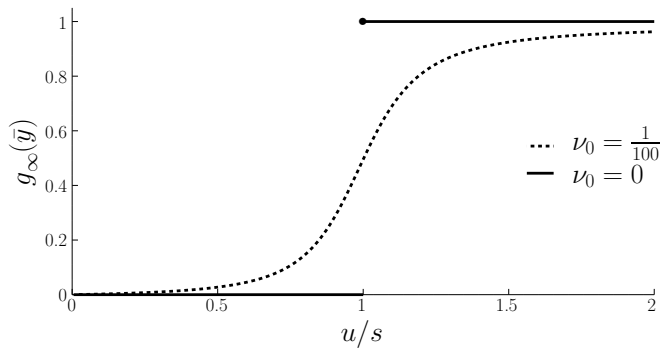}
		}
	\end{center}
	\caption{The probability of an unfit RA type at equilibrium.}
	\label{fig:ratdistequi}
\end{figure}
This is the counterpart to the transcritical bifurcation~(or error threshold) of the equilibrium frequency of the forward process. The probability for an unfit RA type at equilibrium exhibits an even more drastic behaviour: a jump from~$0$ to~$1$ if~$u$ surpasses the critical value~$s$, see Fig.~\ref{fig:ratdistequi}. If~${\nu_0\in (0,1)}$,~$L(\nu_0)$~is positive recurrent and~$L_\infty(\nu_0)$ is almost surely finite. Moreover, in this case~${\bar{y}(\nu_0)\in(0,1)}$ for all~$u>0$. In particular,~$g_{\infty}^{}(\bar{y}(\nu_0),\nu_0)=E_1 [{\bar{y}(\nu_0)}^{L_\infty(\nu_0)}]\in (0,1)$, and hence
$$g^{}_{\infty}(\bar{y}(\nu_0),\nu_0)\begin{cases}
>g^{}_{\infty}(\bar{y}(0),0)=0,&\text{if }u<s,\\
< g^{}_{\infty}(\bar{y}(0),0)=1,&\text{if }u\geq s,
\end{cases} $$ compare Figs.~\ref{fig:stationarydistribution} and~\ref{fig:ratdistequi}. It may seem surprising at first sight that, even though switching off beneficial mutations leads to an increase of~$\bar{y}$ for all values of~$u$, it decreases the probability for the deleterious type to be ancestral if~$u<s$, but increases it for~$u\geq s$. The reason for this is that:
\begin{itemize}
	\item for~$u<s$: in contrast to the case~$\nu_0\in (0,1)$, where~$L_\infty(\nu_0)$ is finite,~$L_\infty(0)$ is infinite, and hence beats~$\bar{y}(0)$ regardless of its value. 
	\item for~$u\geq s$: in contrast to the case~$\nu_0\in (0,1)$, where~$\bar{y}(\nu_0)$ is strictly positive,~$\bar{y}(0)=1$, and therefore, there is no chance to sample an ancestor of type~$0$, regardless of the value of~$L_\infty(\nu_0)$. 
\end{itemize}

Let us finally give an alternative representation of the conditional probability of an unfit RA-type. Motivated by \citet{Taylor2007} and \citet[Sect.~7]{baake2016} in the diffusion limit, we consider the piecewise-deterministic Markov process~$\tilde{Y}:=(\tilde{Y}_t)_{t\geq 0}$ with generator 
\begin{equation}\label{eq:jprocess}\begin{split}
\mathcal{A}_{\tilde{Y}}f(y)= &\ [-sy(1-y)\!-\!u\nu_0y\!+\!u\nu_1(1\!-\!y)]\frac{\partial f}{\partial y} +\frac{u\nu_0y}{1-y}\left[f(1)\!-\!f(y) \right]\\
&+\frac{u\nu_1(1\!-\!y)}{y}\left[f(0)\!-\!f(y)\right]
\end{split} \end{equation}
for~$f\in  C_1([0,1],\R)$ with~$\lim_{y\to 1}\mathcal{A}_{\tilde{Y}}f(y)=\lim_{y\to 0}\mathcal{A}_{\tilde{Y}}f(y)=0$. The latter means that~$\tilde{Y}$ absorbs in~$0$ or~$1$. This process follows the dynamics of the mutation-selection equation up to a random jumping time. At this time the process jumps to one of the boundaries where it is absorbed. Existence and uniqueness of a Markov process corresponding to~$\mathcal{A}_{\tilde{Y}}$ follow by proving that the jump rates, which diverge at the boundary, are in fact bounded along trajectories of the process over any finite time interval. If~$\nu_0\in (0,1)$ and $y_0\in (0,1)$, then~$y(t;y_0)$ never hits the boundary, since $\bar{y}\in (0,1)$. If~$\nu_0=1$ or~$\nu_0=0$ and~${y_0\in (0,1)}$, then~$y(t;y_0)$ hits~$0$ or~$1$, respectively. In both cases, the possibly diverging jump term is absent because either~$\nu_1=0$ or~$\nu_0=0$, respectively. We now show that~$\tilde{Y}$ is dual to~$L$.
\begin{thm} \label{thm:determ.dual.mut.Lt}
	The processes~$\tilde{Y}$ and~$L$ are dual with respect to the duality function~$H(y,n)=y^n$~(from \eqref{eq:dualityfct}), that is, for~$t\geq0$,
	\begin{equation}\label{eq:dualitytildeyL}
	E\big[\big(\tilde{Y}_t\big)^n \mid \tilde{Y_0}=y_0 \big]=E\big[y_0^{L_t} \mid L_0=n \bigr], \qquad \forall y_0\in[0,1], \ n\in \N.
	\end{equation}
\end{thm}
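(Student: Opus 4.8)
The plan is to prove the duality \eqref{eq:dualitytildeyL} by the standard generator-matching criterion that has already served us twice in this section (via \citet[Thm.~3.42]{liggett2010}): with the duality function $H(y,n)=y^n$, it suffices to verify that
\begin{equation*}
\mathcal{A}_{\tilde{Y}} H(\cdot,n)(y) = \mathcal{A}_L H(y,\cdot)(n) \qquad \text{for all } y\in[0,1],\ n\in\N,
\end{equation*}
where $\mathcal{A}_{\tilde{Y}}$ is the generator in \eqref{eq:jprocess} and $\mathcal{A}_L$ is the generator of $L$ displayed in the proof of Proposition~\ref{prop:siegmunddual}. First I would split both sides into a selection part and a mutation part, as was done in the proof of Theorem~\ref{thm:dualityRY}, and treat each contribution separately. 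The selection term of $\mathcal{A}_{\tilde{Y}}$ is $-sy(1-y)\partial_y H = -nsy^n(1-y) = -ns(y^n - y^{n+1})$, which must match the branching part $ns[H(y,n+1)-H(y,n)] = ns(y^{n+1}-y^n)$ of $\mathcal{A}_L$ — and indeed it does, exactly as in Theorem~\ref{thm:dualityRY}.

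The substantive work is the mutation part, and this is where the piecewise-deterministic structure of $\tilde{Y}$ must conspire with the level-shifting and pruning dynamics of $L$. On the forward side, the drift piece $[-u\nu_0 y + u\nu_1(1-y)]\partial_y H$ contributes $n[-u\nu_0 y + u\nu_1(1-y)]y^{n-1} = -u\nu_0 n y^n + u\nu_1 n (y^{n-1}-y^n)$, while the two jump terms contribute
\begin{equation*}
\frac{u\nu_0 y}{1-y}\bigl[1 - y^n\bigr] + \frac{u\nu_1(1-y)}{y}\bigl[0 - y^n\bigr].
\end{equation*}
The key step is to simplify the first jump term using the finite geometric sum $\frac{1-y^n}{1-y} = \sum_{i=0}^{n-1} y^i$, so that $\frac{u\nu_0 y}{1-y}(1-y^n) = u\nu_0 \sum_{i=1}^{n} y^i$; the second jump term simplifies to $-u\nu_1(1-y)y^{n-1}$. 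I would then collect all $u\nu_0$- and $u\nu_1$-terms and show they reproduce exactly the mutation part of $\mathcal{A}_L$, namely $\bigl((n-1)u\nu_1 + \1_{\{n\geq 1\}}u\nu_0\bigr)(y^{n-1}-y^n) + u\nu_0\sum_{i=1}^{n-2}(y^i - y^n)$. The deleterious ($u\nu_1$) terms match cleanly and reproduce the $(n-1)u\nu_1$ down-step. The beneficial ($u\nu_0$) terms are the delicate part: the geometric sum $u\nu_0\sum_{i=1}^n y^i$ on the forward side must be rearranged to yield the single down-step at rate $\1_{\{n\geq 1\}}u\nu_0$ together with the multi-level pruning sum $u\nu_0\sum_{i=1}^{n-2}(y^i-y^n)$ on the backward side. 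I expect this bookkeeping of the geometric sum against the pruning sum to be the main obstacle — not because it is deep, but because the index ranges and the indicator $\1_{\{n\geq 1\}}$ must be tracked carefully, and one should check the boundary case $n=1$ (where the pruning sum is empty) by hand.

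One point deserving explicit care is the well-posedness caveat built into the definition of $\mathcal{A}_{\tilde{Y}}$: the jump rates $u\nu_0 y/(1-y)$ and $u\nu_1(1-y)/y$ diverge at the boundary, and the generator is only defined on $f\in C_1([0,1],\R)$ satisfying $\lim_{y\to 1}\mathcal{A}_{\tilde{Y}}f(y)=\lim_{y\to 0}\mathcal{A}_{\tilde{Y}}f(y)=0$. I would note that $H(\cdot,n)=y^n$ lies in this domain: the geometric-sum simplification above shows $\mathcal{A}_{\tilde{Y}}H(\cdot,n)(y)$ extends continuously to $y=1$ (the apparent singularity cancels), and analogously at $y=0$, so the pointwise generator identity holds on all of $[0,1]$ and the duality criterion applies. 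Finally, since the identity is verified for every $n\in\N$ and $H(\cdot,n)$ is continuous and bounded, \citet[Thm.~3.42]{liggett2010} (or \citet[Prop.~1.2]{jansen2014}) upgrades the generator matching to the integrated duality \eqref{eq:dualitytildeyL}, completing the proof.
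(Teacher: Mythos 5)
Your proposal is correct and takes essentially the same route as the paper: generator matching via \citet[Thm.~3.42]{liggett2010}, splitting off the selection parts already matched in Theorem~\ref{thm:dualityRY}, and collapsing the jump term $\frac{u\nu_0y}{1-y}(1-y^n)$ by the geometric sum, with the paper simply writing the mutation part of $\mathcal{A}_L$ in the combined form $(n-1)u\nu_1[f(n-1)-f(n)]+u\nu_0\sum_{k=1}^{n-1}[f(k)-f(n)]$ so that the bookkeeping you anticipate is immediate. The one trap you flagged is real: the indicator in the generator displayed in the proof of Proposition~\ref{prop:siegmunddual} should be $\1_{\{n>1\}}$ (consistent with \eqref{eq:pLDASGrates}), not $\1_{\{n\geq 1\}}$, and with that correction your hand-check of the boundary case $n=1$ goes through (both mutation parts vanish identically).
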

\begin{proof} Once more we apply the duality criterion for generators \citep[Thm.~3.42]{liggett2010}. Note that we can rewrite~$\mathcal{A}_{\tilde{Y}}=\mathcal{A}_y^s+\mathcal{A}^{u}_{\tilde{Y}},$ with~$\mathcal{A}_y^s$ of \eqref{eq:generatorYparts} and \begin{align*}\mathcal{A}^{u}_{\tilde{Y}}f(y):=& \ [-yu\nu_0+u\nu_1(1-y)]\frac{\partial f}{\partial y} +\frac{y}{1-y}u\nu_0\left[f(1)-f(y) \right]\\
	&+\frac{1-y}{y}u\nu_1\left[f(0)-f(y)\right],
	\end{align*}
	which should not be confused with~$\mathcal{A}_Y^u$ \eqref{eq:generatorYparts}. In a similar way,~$\mathcal{A}_L=\mathcal{A}_R^s+\mathcal{A}_L^{u},$ with~$\mathcal{A}_R^s$ of \eqref{eq:generatorRparts} and $$\mathcal{A}_L^{u}f(n)=(n-1)u\nu_1[f(n-1)-f(n)]+u\nu_0\sum_{k=1}^{n-1}[f(k)-f(n)].$$
	In the proof of Theorem~\ref{thm:dualityRY}, we already showed~$\mathcal{A}_y^sH(\cdot,n)(y)=\mathcal{A}_R^sH(y,\cdot)(n).$ Hence, it suffices to check~$\mathcal{A}_{\tilde{Y}}^{u}H(\cdot,n)(y)=\mathcal{A}_L^uH(y,\cdot)(n)$, which then implies $$\mathcal{A}_{\tilde{Y}} H(\cdot,n)(y)=\mathcal{A}_LH(y,\cdot)(n),\qquad \forall y\in [0,1], n\in \N.$$ Indeed, we obtain $$\begin{aligned}
	&\ \mathcal{A}_{\tilde{Y}}^{u} H(\cdot,n)(y)\\
	=&\ (n-1)u\nu_1\left[y^{n-1}-y^n\right]+ u\nu_0\left[y \frac{1-y^n}{1-y}-ny^n\right] \\
	=&\ (n-1)u\nu_1\left[H(y,n-1)-H(y,n)\right]+u\nu_0\sum_{j=1}^{n-1}\left[H(y,j)-H(y,n) \right]\\
	=&\ \mathcal{A}_L^uH(y,\cdot)(n),
	\end{aligned} $$ which proves the claim.\qed
\end{proof}

We now obtain a characterisation of~$g_{\infty}^{}(y_0)$ that does not depend on~$L$ by taking~$t\to\infty$ in~\eqref{eq:ratypetails} and~\eqref{eq:dualitytildeyL}.
\begin{coro} \label{coro:absorptiontildey}For~$y_0\in[0,1]$, we have
	$$g^{}_{\infty}(y_0)=P(\lim_{t\to\infty} \tilde{Y}_t = 1 \mid  \tilde{Y}_0=y_0).$$
\end{coro}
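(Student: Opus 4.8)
Corollary~\ref{coro:absorptiontildey} asserts $g^{}_{\infty}(y_0)=P(\lim_{t\to\infty}\tilde Y_t=1\mid \tilde Y_0=y_0)$. We already have two key ingredients: the duality relation \eqref{eq:dualitytildeyL} of Theorem~\ref{thm:determ.dual.mut.Lt}, namely $E[(\tilde Y_t)^n\mid \tilde Y_0=y_0]=E[y_0^{L_t}\mid L_0=n]$, and the identity $g^{}_r(y_0)=E_1[y_0^{L_r}]$ from \eqref{eq:ratypetails}. The plan is to set $n=1$ in the duality, pass to the limit $t\to\infty$ on both sides, and identify each limit.

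**The plan.** First I would take $n=1$ in \eqref{eq:dualitytildeyL}, which gives $E[\tilde Y_t\mid \tilde Y_0=y_0]=E_1[y_0^{L_t}]=g^{}_t(y_0)$ for every $t\ge 0$. On the right-hand side, letting $t\to\infty$ yields $g^{}_\infty(y_0)$ by definition. For the left-hand side, I would use that $\tilde Y$ is absorbed in $\{0,1\}$: the boundary conditions $\lim_{y\to 0}\mathcal A_{\tilde Y}f(y)=\lim_{y\to 1}\mathcal A_{\tilde Y}f(y)=0$ imposed after \eqref{eq:jprocess} make $0$ and $1$ absorbing states. Hence $\tilde Y_\infty:=\lim_{t\to\infty}\tilde Y_t$ exists almost surely and takes values in $\{0,1\}$, so that $\tilde Y_t\to \1_{\{\tilde Y_\infty=1\}}$ almost surely. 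Since $\tilde Y_t\in[0,1]$ is bounded, dominated (or bounded) convergence gives
\begin{equation*}
\lim_{t\to\infty}E[\tilde Y_t\mid \tilde Y_0=y_0]=E\big[\tilde Y_\infty\mid \tilde Y_0=y_0\big]=P(\tilde Y_\infty=1\mid \tilde Y_0=y_0).
\end{equation*}
Equating the two limits proves the claim.

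**The main obstacle.** The delicate point is justifying that $\tilde Y_t$ actually converges almost surely as $t\to\infty$ and is absorbed in $\{0,1\}$, rather than merely that the expectation converges. This rests on the existence and well-posedness of $\tilde Y$ as a piecewise-deterministic Markov process, which the text flags as requiring that the diverging jump rates remain bounded along trajectories over finite time horizons; granting that, the process performs finitely many jumps before hitting a boundary and then stays there, giving the needed almost-sure absorption. I would therefore invoke the construction of $\tilde Y$ sketched after \eqref{eq:jprocess} to guarantee a well-defined absorbing limit $\tilde Y_\infty\in\{0,1\}$, and treat the interchange of limit and expectation as routine by boundedness. A minor additional check is that the duality \eqref{eq:dualitytildeyL} at $n=1$ holds for all $y_0\in[0,1]$ including the boundary values, which is immediate since the identity is stated for every $y_0\in[0,1]$. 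With these points in hand, the corollary follows directly from Theorem~\ref{thm:determ.dual.mut.Lt} and \eqref{eq:ratypetails} by taking $t\to\infty$.
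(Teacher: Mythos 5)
Your proposal is correct and follows exactly the paper's route: the paper obtains the corollary precisely by setting $n=1$ in the duality \eqref{eq:dualitytildeyL}, identifying $E_1[y_0^{L_t}]$ with $g_t(y_0)$ via \eqref{eq:ratypetails}, and letting $t\to\infty$, using that $\tilde{Y}$ absorbs in $\{0,1\}$ together with bounded convergence. Your only (harmless) imprecision is describing absorption as the process ``hitting'' a boundary after finitely many jumps; when $\nu_0=0$ and $u>s$ the process may instead converge to $1$ along the deterministic flow without ever jumping, but the limit still exists in $\{0,1\}$ almost surely, which is all the argument needs.
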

\begin{remark} The Kolmogorov backward equation for the absorption probability of~$\tilde{Y}_t$ in~$1$ leads to the characterisation of~$g_{\infty}^{}$ as the solution to the boundary value problem \begin{equation} \label{eq:bvptildey}\mathcal{A}_{\tilde{Y}}g_{\infty}^{}(y_0)=0\quad  \text{for }y_0\in (0,1),\end{equation} complemented by~$g_\infty^{}(0)=0$ and~$g_{\infty}^{}(1)=1$. It is the deterministic limit analogue of the boundary value problem in \citet[Eqn.~(11)]{Taylor2007}.
\end{remark}
\begin{figure}[b!]
	\begin{minipage}{0.5\textwidth} 
		\begin{center}
			\scalebox{.6}{\begin{tikzpicture}
				\node [left] at (0,0.7) {\scalebox{1.5}{$1$}};
				\node [left] at (0,0) {\scalebox{1.5}{$1$}};
				\draw[line width=.5mm ] (3,2.1) -- (4,2.1) -- (4.1,1.4) -- (5,1.4) -- (5.1,.7) -- (6,.7) -- (6.1,0);
				\draw[line width=1mm,dotted] (6.1,0) -- (7,0) -- (8,0);
				\draw[line width=1mm] (0,0.7)-- (0.9,0.7) -- (1,1.4) -- (2,1.4) -- (2.1,.7) -- (3,0.7) ;
				\draw[-{open triangle 45[scale=2.5]},line width=1mm,dotted] (3,.7) -- (4,.7) -- (4.1,0) -- (6,0);
				\draw[-{open triangle 45[scale=2.5]},line width=.5mm] (3,1.4) -- (4,1.4) -- (4.1,.7) -- (5,.7);
				\draw[-{open triangle 45[scale=2.5]},line width=.5mm] (1,0) -- (4,0);
				\draw[-{open triangle 45[scale=2.5]},line width=.5mm] (0,0 )-- (0.9,0) -- (1,0.7) -- (2,0.7);
				\draw[-{>},line width=0.5mm] (3.5,-.3) -- (3.5,-1);
				\draw[dashed,opacity=100] (3,.7) --(3,2.45);
				\draw (3,.7) circle (1.7mm)  [fill=white!100];
				\draw (7,0) circle (1.7mm)  [fill=white!100];
				\node at (1,0) {\scalebox{1.6}{$\times$}} ;
				
				\draw[line width=1mm,owngrey] (0,-1.2) -- (3,-1.2);
				\draw[line width=1mm,dotted,owngrey] (3,-1.2) -- (8,-1.2);
				\draw (3,-1.2) circle (1.7mm)  [fill=white!100];
				\end{tikzpicture}}
		\end{center}
	\end{minipage}
	\begin{minipage}{0.4\textwidth} 
		\begin{center}
			\scalebox{.6}{\begin{tikzpicture}
				\node [left] at (-.2,0.7) {\scalebox{1.5}{$1$}};
				\node [left] at (-.2,0) {\scalebox{1.5}{$0$}};
				\draw[line width=.5mm ] (3,2.1) -- (4,2.1) -- (4.1,1.4) -- (5,1.4) -- (5.1,.7) -- (6,.7) -- (6.1,0);
				\draw[line width=.5mm ] (0,0.7)-- (0.9,0.7) -- (1,1.4) -- (2,1.4) -- (2.1,.7);
				\draw[-{open triangle 45[scale=2.5]},line width=1mm,dotted] (2.1,.7) -- (3,.7) -- (4,.7) -- (4.1,0) -- (6,0);
				\draw[-{open triangle 45[scale=2.5]},line width=.5mm] (3,1.4) -- (4,1.4) -- (4.1,.7) -- (5,.7);
				\draw[-{open triangle 45[scale=2.5]},line width=.5mm] (1,0) -- (4,0);
				\draw[-{open triangle 45[scale=2.5]},line width=1mm,dotted] (0,0 )-- (0.9,0) -- (1,0.7) -- (2,0.7);
				\draw[dashed,opacity=100] (3,.7) --(3,2.45);
				\draw[-{>},line width=0.5mm] (3.5,-.3) -- (3.5,-1);
				\draw (3,.7) circle (1.7mm)  [fill=white!100];
				\node at (1,0) {\scalebox{1.6}{$\times$}} ;
				\draw[line width=1mm,dotted] (6.1,0) -- (8,0);
				\draw (7,0) circle (1.7mm)  [fill=white!100];
				\draw[line width=1mm,dotted,owngrey] (0,-1.2) -- (8,-1.2);
				\end{tikzpicture}}
		\end{center}
	\end{minipage} \hfill
	\caption{The type along the ancestral line depends on the pLD-ASG and on the initial assignment of types. A solid~(dotted) ancestral line corresponds to an unfit~(fit) type. The type assignment on the left leads to a type change due to a beneficial mutation. The type assignment on the right does not lead to a type change on the ancestral line. The extracted type evolution along the ancestral line is depicted in grey below.}
	\label{fig:pLDASGtrueancestralline}
	\end{figure}
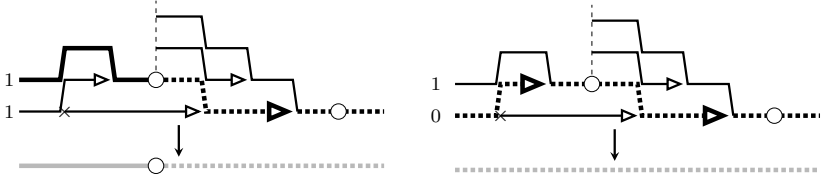
	
In the remainder of the paper we provide a heuristic explanation for the duality relation of Theorem~\ref{thm:determ.dual.mut.Lt}. For simplicity, we only treat the case~${n=1}$ here, but the argument is easily adapted to arbitrary~$n$. We sample an individual at time~$t$ and construct the pLD-ASG until time~$0$ where the type distribution is~$(1-y_0,y_0)$. Let~$L_t$ be the number of lines in the pLD-ASG at time~$0$. Then, $$y_0^{L_t}=P_{y_0}(J_t=1\mid L_t).$$
Our aim is to construct an appropriate random variable~$K_t$~(different from~$L_t$), such that $$\tilde{Y}_t=P_{y_0}(J_{t}=1\mid K_t).$$
Theorem~\ref{thm:determ.dual.mut.Lt} would then be a direct consequence of this result. Note that for every realisation of the pLD-ASG and each initial configuration, we can extract the type evolution along the ancestral line and partition it into pieces in which the type is constant; see~Fig.~\ref{fig:pLDASGtrueancestralline}. For a given pLD-ASG and a given assignment of types, the type along the ancestral line changes only by mutations. We will keep track of the kind of mutation that induces the last type change~(that is, the last non-silent mutation) along the ancestral line before time~$0$. Let~$K_t$ be the random variable with values in~$\{\varnothing,\Circle, \times\}$ that encodes this. The symbols~$\Circle$ or~$\times$ represent a beneficial or deleterious type-changing mutation; the symbol~$\varnothing$ indicates that there is no type change in~$[0,t]$. Clearly,~$K_t$ is a function of the pLD-ASG and the initial assignment of types. If there is no type change on the ancestral line, i.e.~$\{K_t=\varnothing\}$, the type of the sampled individual and its ancestor at time~$t$ coincide, and therefore this type is~$1$ with probability~$y(t;y_0)$. In particular, before we see a type-changing mutation on the ancestral line, the desired probability evolves as the proportion of deleterious individuals in the population, which explains the drift term in the generator of the process~$\tilde{Y}$. The event~$\{K_t=\Circle\}$ implies that the type of the ancestor at time~$0$ is deleterious and therefore~$\tilde{Y}$ must be~$1$. This explains the jump to~$1$ of the process~$\tilde{Y}$. The event~$\{K_t=\times\}$ implies that the ancestor at time~$0$ is of the beneficial type, and this explains the jump to~$0$ of the process~$\tilde{Y}$. Summarising, $$P_{y_0}(J_t=1\mid K_t)=\begin{cases}
y(t;y_0),&\text{if }K_t=\varnothing,\\
1,&\text{if }K_t=\Circle,\\
0,&\text{if }K_t=\times,
\end{cases}$$ see also Fig.~\ref{fig:typechangerate}. It remains to explain the rates at which these type changes appear on the ancestral line. We only explain the jump rate to~$0$; the jump rate to~$1$ follows in an analogous way. The rate at which a deleterious type change at time~$t$ occurs on the ancestral line, and therefore a jump to~$0$ of~$\tilde{Y}$, is given by~$\lim_{\varepsilon\rightarrow 0}P_{y_t}^{\varepsilon}(J_\varepsilon=0 \mid J_0=1)/\varepsilon$, where~$P_{y_t}^\varepsilon$ means that we condition on the type distribution at backward time~$\varepsilon$ being equal to~$(1-y(t;y_0),y(t;y_0))$. Note that 
\begin{figure}[t!]
	\begin{minipage}{0.44\textwidth} 
		\begin{center}
			\scalebox{.6}{\begin{tikzpicture}
				\draw[dashed] (3.2,-1.3) --(3.2,2.2);
				
				
				\draw[line width=.2mm] (0,0) -- (0,4);
				\draw[line width=.2mm] (0,0) -- (8,0);
				\node [right] at (0,-.5) {\scalebox{1.5}{$0$}};
				\node [left] at (8,-.5) {\scalebox{1.5}{$t$}};
				\node [right] at (-.5,.2) {\scalebox{1.5}{$0$}};
				\node [right] at (-.5,3.8) {\scalebox{1.5}{$1$}};
				
				\node [right] at (1.3,2.5) {\scalebox{1.65}{$\tilde{Y}_t$}};
				\node [right] at (5.5,2) {\scalebox{1.65}{$y(t;y_0)$}};
				
				\draw[line width=.3mm] (0,1) to [out= 32, in=180] (8,2.5);
				\draw[line width=1.5mm] (0,1) to [out= 32, in=191] (3.2,2.28);
				\draw[line width=1.5mm] (8,4) -- (3.2,4);
				\draw[dashed,opacity=0] (2,0) --(2,4);
				\draw[line width=1mm,gray] (0,-1.5) -- (3.2,-1.5);
				\draw[line width=1mm,gray,dotted] (3.2,-1.5) -- (8,-1.5);
				\draw (3.2,-1.5) circle (1.7mm)  [fill=white!100];
				\end{tikzpicture}}
		\end{center}
	\end{minipage}
	\hfill
	\begin{minipage}{0.44\textwidth} 
		
		\begin{center}
			\scalebox{.6}{\begin{tikzpicture}
				\draw[dashed] (3.2,-1.3) --(3.2,2.2);
				
				
				\draw[line width=.2mm] (0,0) -- (0,4);
				\draw[line width=.2mm] (0,0) -- (8,0);
				\node [right] at (0,-.5) {\scalebox{1.5}{$0$}};
				\node [left] at (8,-.5) {\scalebox{1.5}{$t$}};
				\node [right] at (-.5,.2) {\scalebox{1.5}{$0$}};
				\node [right] at (-.5,3.8) {\scalebox{1.5}{$1$}};
				
				\node [right] at (1.3,2.5) {\scalebox{1.65}{$\tilde{Y}_t$}};
				\node [right] at (5.5,2) {\scalebox{1.65}{$y(t;y_0)$}};
				
				\draw[line width=.3mm] (0,1) to [out= 32, in=180] (8,2.5);
				\draw[line width=1.5mm] (0,1) to [out= 32, in=191] (3.2,2.28);
				\draw[line width=1.5mm] (8,0) -- (3.2,0);
				\draw[dashed,opacity=0] (2,0) --(2,4);
				\draw[line width=1mm,owngrey,dotted] (0,-1.5) -- (3.2,-1.5);
				\draw[line width=1mm,owngrey] (3.2,-1.5) -- (8,-1.5);
				\node at (3.2,-1.5) {\scalebox{1.6}{$\times$}} ;
				\end{tikzpicture}}
		\end{center}
	\end{minipage}
\caption{A type change on the ancestral line~(grey) can either be due to a mutation to type~$0$ or type~$1$. The rate for either one depends on the type distribution at the time of the type change. The bold solid line corresponds to~$\tilde{Y}_t$, whereas the thin solid line corresponds to~$y(t;y_0)$.}
	\label{fig:typechangerate}
	
\end{figure}
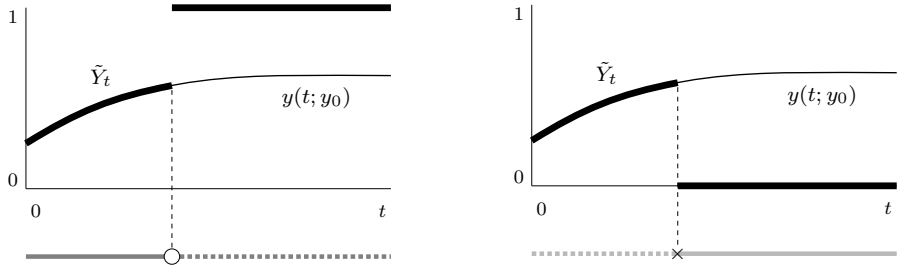 
\begin{equation}\begin{split}
P_{y_t}^\varepsilon(J_{\varepsilon}=0\mid J_{0}=1)=&\ P_{y_t}^\varepsilon(J_{0}=1\mid J_{\varepsilon}=0)\frac{P_{y_t}^\varepsilon(J_{\varepsilon}=0)}{P_{y_t}^\varepsilon(J_{0}=1)}\\
=&\ P_{y_t}^\varepsilon(J_{0}=1\mid J_{\varepsilon}=0)\,\frac{1-E_1[y(t;y_0)^{L_{\varepsilon}}]}{y(t+\varepsilon,y_0)}.\end{split}\label{eq:Jr0J01}
\end{equation} 
Moreover, denoting by~$M_\varepsilon^1$ the event of a single deleterious~(not necessarily type-changing) mutation on the ancestral line in a time interval of length~$\varepsilon$, we deduce that
\begin{equation}\begin{aligned}
\label{eq:J01Jr0} P_{y_t}^\varepsilon(J_{0}=1\mid J_{\varepsilon}=0)&=P_{y_t}^\varepsilon(J_{0}=1,M^1_{\varepsilon}\mid J_{\varepsilon}=0)+o(\varepsilon)\\
&=P_{y_t}^\varepsilon(M^1_{\varepsilon}\mid J_{\varepsilon}=0)+o(\varepsilon)\\
&=u\nu_1\varepsilon+o(\varepsilon).\end{aligned}\end{equation}
Combining \eqref{eq:Jr0J01} and \eqref{eq:J01Jr0} leads to $$\lim\limits_{\varepsilon\rightarrow 0}\frac{P_{y_t}(J_{\varepsilon}=0\mid J_{0}=1)}{\varepsilon}=u\nu_1\frac{1-y(t;y_0)}{y(t;y_0)},$$
which corresponds to the jump rate to~$0$ in \eqref{eq:jprocess}.
\begin{remark}
	It seems this argument can be applied to obtain a pathwise duality. But as noted in Remark~\ref{rem:strongdualityforwardpicture}, this requires a particle representation of the forward process in the deterministic limit.
\end{remark}


\begin{acknowledgements}
It is our pleasure to thank Anton Wakolbinger and Ute Lenz for stimulating and fruitful discussions. Furthermore, we are grateful to Anton Wakolbinger, Jay Taylor, and an unknown referee for helpful comments on the manuscript. This project received financial support from Deutsche Forschungsgemeinschaft~(Priority Programme SPP 1590 ‘Probabilistic Structures in Evolution’, Grant No. BA 2469/5-1, and CRC 1283 ‘Taming Uncertainty’, Project~C1).
\end{acknowledgements}



%

\end{document}